\def\er{{\mathbb R}}
\def\M{{\mathcal M}}
\def\F{{\mathcal F}}
\def\G{{\mathcal G}}
\DeclareMathOperator{\spa}{span}
\def\f{{\varphi}}
\def\R{{\er}}
\def\mi{{\mu}}
\def\ra{{\rightarrow}}
\def\is#1#2{\left\langle #1 , #2 \right\rangle}
\def\eps{\varepsilon}
\newcommand{\cov}{{\rm Cov}}
\def\Epsilon{\mathcal{E}}
\newcommand{\RO}{\R_{+}}
\def\1{{\mathbf{1}}}
\newcommand{\bh}{{\bar{h}}}
\def\n2n{{\nu_2^n}}
\newcommand{\tf}{{\tilde{f}}}
\def\np1{{\nu_p}}
\def\cppb2{{C_5}}
\newcommand{\E}{{\mathbb{E}}}
\newcommand{\sleq}{{\leq}}
\DeclareMathOperator{\Int}{Int}
\DeclareMathOperator{\supp}{supp}
\newtheorem{thm}{Theorem}[section]
\newtheorem{lemma}[thm]{Lemma}
\newtheorem{cor}[thm]{Corollary}
\newtheorem{Def}[thm]{Definition} 
\title{A simpler proof of the negative association property for absolute values of measures tied to generalized Orlicz balls
\footnote{Keywords: Negative association, Orlicz balls, log--concave measure}
\footnote{2000 Mathematical Subject Classification: 52A20 (60D05)}
}
\author{Jakub Onufry Wojtaszczyk\thanks{Partially supported by MEiN Grant no 1 PO3A 012 29}\thanks{Partially supported by the Polish Foundation for Science} (onufry@duch.mimuw.edu.pl)\\ 
Department of Mathematics, Computer Science and Mechanics \\ 
University of Warsaw, ul. Banacha 2, 02--097 Warsaw, Poland\\}
\date{}
\begin{document}

\maketitle

\begin{abstract}
Negative association for a family of random variables $(X_i)$ means that for any coordinate--wise increasing functions $f,g$ we have $$\E f(X_{i_1},\ldots,X_{i_k}) g(X_{j_1},\ldots,X_{j_l}) \leq \E f(X_{i_1},\ldots,X_{i_k}) \E g(X_{j_1},\ldots,X_{j_l})$$ for any disjoint sets of indices $(i_m)$, $(j_n)$. It is a way to indicate the negative correlation in a family of random variables. It was first introduced in 1980s in statistics, and brought to convex geometry in 2005 to prove the Central Limit Theorem for Orlicz balls.

The paper gives a relatively simple proof of negative association of absolute values for a wide class of measures tied to generalized Orlicz balls, including the uniform measures on generalized Orlicz balls.
\end{abstract}

\section{Introduction}
We shall prove a property called the {\em negative assoctiation of absolute values} for a class of measures stemming from generalized Orlicz balls. The most important case, that is the case of uniform measures on generalized Orlicz balls was considered in \cite{malcin}. The proof given there, however, was complex and difficult to understand. The more general case, proved here, could probably also be tackled using the techniques from \cite{malcin}, but the paper would likely be even harder to read. The argument in this paper, using a technique similar to the Kannan--Lovazs--Simonovits localization lemma, is much simpler. The result itself has quite a few consequences, see eg. \cite{malcin} or \cite{fleury}, we will not explore them in this note. Negative association is defined as follows:
\begin{Def} \label{NegAssDef} We say a sequence $X_1, \ldots, X_n$ of random variables is {\em negatively associated}, if for any bounded coordinate--wise increasing functions $f : \R^k \ra \R$ and $g : \R^l \ra \R$ and disjoint subsets $\{i_1,\ldots,i_k\}$ and $\{j_1,\ldots,j_l\}$ of  $\{1,2,\ldots,n\}$ we have
\begin{equation} \label{NegAss} \cov \big(f(X_{i_1},\ldots,X_{i_k}), g(X_{j_1},\ldots,X_{j_l})\big) \leq 0.\end{equation}
\end{Def}

This definition was introduced in the 1980s by Alam, Joag--Dev, Proschan and Saxena for applications in statistics. 

When in a linear space with a fixed basis $(e_1,\ldots,e_n)$, by $x_i$ we denote $\is{x}{e_i}$ for a given vector $x$. We shall write $x \leq y$ for vectors $x,y \in V$ if $x_i \leq y_i$ for all $i \in \{1,2,\ldots,\dim V\}$. By $V_+$ we denote the set $\{x \in V : 0 \leq x\}$. By $\Int_l K$ we will denote the relative interior of $K$ in $l$.

Recall the following definitions:
\begin{Def} A {\em Young function} is an increasing convex function $f: \RO \ra \RO \cup \infty$ with $f(0) = 0$ and satisfying $f(x) \neq 0$, $f(y) \neq \infty$ for some $x,y > 0$. A generalized Orlicz ball is a set in $\R^n$ given by the inequality $\sum_{i=1}^n f_i(|x_i|) \leq n$ for some Young functions $f_1,\ldots,f_n$.\end{Def}

As noted in \cite{malcin}, if $X$ is a random vector equidistributed on a 1--symmetric convex body, one should consider the negative association property not for the sequence $(X_i)$, but rather for the absolute values $(|X_i|)$. For 1--symmetric bodies this is equivalent to considering random vectors equidistributed on the positive generalized quadrant of the body (that is vectors conditioned by $X_i \geq 0$ for all $i$). Thus we shall work only on $\RO^n$ instead of $\R^n$.

Also note that the property of being an Orlicz ball is dependent upon the choice of the coordinate system (or the basis) in the space, and thus one should rather say that a set is an Orlicz ball in a given coordinate system, than in and of itself. We shall speak more in the language of functions (that is, instead of talking about the Orlicz ball, we shall consider its characteristic function), which motivates the following definitions:

\begin{Def} By an {\em oriented function} we shall mean a triple $\F = (s, V, \Epsilon)$, where $V$ is a linear space of finite dimension over $\R$, $\Epsilon$ is a basis of $V$ and $f : V_+ \ra \R$. \end{Def} 

\begin{Def} An oriented function $\F = (s,V,\Epsilon)$ is called {\em Orlicz--based} if 
$$s (x_1,\ldots,x_n) = m\Big(\sum_{i=1}^n f_i(x_i) \Big) \prod_{i=1}^n w_i(x_i),$$
where $(f_i)_{i=1}^n$ are Young functions or $f_i \equiv \infty$, $(w_i)_{i=1}^n$ are log--concave functions supported on $\RO$ and $m : \RO \cup \{\infty\} \ra \RO$ is a log--concave function with compact support, attaining its maximum at 0, with $m(\infty) = 0$.
\end{Def}

We shall sometimes speak of $s$ as being a function on the whole of $V$ by extending it by $0$ outside $V_+$.

The characteristic function of the positive generalized quadrant of an generalized Orlicz ball gives the simplest example of an Orlicz--based function.

The proof of the following two simple lemmas is given in \cite{malcin}:

\begin{lemma}\label{rosncalk} Let $\mi$ be any measure on the interval $I$. Let $f, g, h : I \ra \RO$, suppose $\supp f \subset \supp g$ and both $f\slash g$ and $h$ are decreasing on their domains. Then
$$\frac{\int_I f(x) d\mi(x)}{\int_I g(x) d\mi(x)} \leq \frac{\int_I f(x) h(x) d\mi(x)}{\int_I g(x) h(x) d\mi(x)}$$ if both sides are well defined.\end{lemma}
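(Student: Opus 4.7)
The plan is to reduce the inequality to the classical Chebyshev/correlation inequality for comonotone functions under a single well-chosen probability measure. Rewrite the desired conclusion as
\[
\Bigl(\int f\,d\mi\Bigr)\Bigl(\int gh\,d\mi\Bigr) \;\le\; \Bigl(\int fh\,d\mi\Bigr)\Bigl(\int g\,d\mi\Bigr),
\]
which is what one gets after clearing the positive denominators (the hypothesis that both sides are well defined lets us assume $\int g\,d\mi$ and $\int gh\,d\mi$ are strictly positive). The asymmetry between $f$ and $g$ suggests tilting the base measure by $g$: introduce the probability measure $\nu$ on $I$ with density $g\slash \int g\,d\mi$ with respect to $\mi$. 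Since $\supp f\subset\supp g$, the ratio $F:=f\slash g$ is well defined $\nu$-a.e., and by hypothesis $F$ and $H:=h$ are both decreasing on the relevant interval.

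Next I would rewrite each integral as a $\nu$-expectation. Computing,
\[
\Ex_\nu F=\frac{\int f\,d\mi}{\int g\,d\mi},\qquad
\Ex_\nu H=\frac{\int gh\,d\mi}{\int g\,d\mi},\qquad
\Ex_\nu (FH)=\frac{\int fh\,d\mi}{\int g\,d\mi},
\]
so the desired inequality is exactly the correlation bound $\Ex_\nu(FH)\ge \Ex_\nu F\cdot \Ex_\nu H$ for the two decreasing functions $F$ and $H$ under the probability measure $\nu$.

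The remaining step is the standard doubling trick: because $F$ and $H$ are both decreasing on $I$, for every pair $x,y\in I$ we have $\bigl(F(x)-F(y)\bigr)\bigl(H(x)-H(y)\bigr)\ge 0$. Integrating this against $\nu\otimes\nu$ and expanding yields
\[
2\bigl(\Ex_\nu(FH)-\Ex_\nu F\cdot \Ex_\nu H\bigr)\ge 0,
\]
which is the required inequality; substituting back gives the lemma. The only real care needed is with the support convention (so that $F=f\slash g$ is unambiguously defined on the support of $\nu$) and with ruling out degenerate cases where denominators vanish, both of which are built into the hypotheses. I do not expect a genuine obstacle here; the lemma is essentially a clean application of Chebyshev's integral inequality after an appropriate change of measure.
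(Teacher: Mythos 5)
Your proof is correct. The change of measure to $\nu = g\,d\mi/\int_I g\,d\mi$ reduces the claim to $\cov_\nu(F,H)\ge 0$ for the comonotone pair $F=f/g$, $H=h$, and the two--point doubling argument establishes this; the hypotheses $\supp f\subset\supp g$ and the well--definedness of both ratios handle exactly the degeneracies you flag. Note that the present paper does not actually reproduce a proof of this lemma --- it cites \cite{malcin} for it --- so there is no in--paper argument to compare against, but your derivation is a clean and standard self--contained route (Chebyshev's integral inequality after tilting by $g$) and proves precisely the stated inequality.
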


\begin{lemma}\label{rosnmix} Let $\mi$, $I$, $f$ and $g$ satisfy conditions as above. Then for any $a < b \leq d$ and $a \leq c < d$ we have 
$$\frac{\int_a^b f(x)d\mi}{\int_a^b g(x)d\mi} \geq \frac{\int_c^d f(x)d\mi}{\int_c^d g(x)d\mi}$$ if both sides are well defined.
\end{lemma}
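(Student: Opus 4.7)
My plan is to reduce the inequality to the disjoint case and then use a mediant argument.

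First, I would prove the key special case: for $a \leq b \leq c \leq d$ with both ratios well defined,
\[
\frac{\int_a^b f\,d\mi}{\int_a^b g\,d\mi} \;\geq\; \frac{\int_c^d f\,d\mi}{\int_c^d g\,d\mi}.
\]
One can either apply Lemma \ref{rosncalk} on $[a,d]$ with the (weakly) decreasing indicator $h=\1_{[a,b]}$ and rearrange, or argue directly: since $f/g$ is decreasing and $[a,b]$ lies to the left of $[c,d]$, there exists $r\in\R$ with $f(x)\geq r\,g(x)$ for $x\in[a,b]\cap\supp g$ and $f(y)\leq r\,g(y)$ for $y\in[c,d]\cap\supp g$; integrating gives $\int_a^b f\geq r\int_a^b g$ and $\int_c^d f\leq r\int_c^d g$, hence $\int_a^b f/\int_a^b g\geq r\geq\int_c^d f/\int_c^d g$.

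For the general case, consider $a<b\leq d$ and $a\leq c<d$. If $b\leq c$ the previous step already gives the result. Otherwise the intervals overlap in $[c,b]$, and I would decompose $[a,b]=[a,c]\cup[c,b]$ and $[c,d]=[c,b]\cup[b,d]$. Writing
\[
R_1=\frac{\int_a^c f\,d\mi}{\int_a^c g\,d\mi},\quad R_2=\frac{\int_c^b f\,d\mi}{\int_c^b g\,d\mi},\quad R_3=\frac{\int_b^d f\,d\mi}{\int_b^d g\,d\mi},
\]
the disjoint-intervals step applied to the pairs $([a,c],[c,b])$ and $([c,b],[b,d])$ yields $R_1\geq R_2\geq R_3$. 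The ratio $\int_a^b f/\int_a^b g$ is the weighted mean of $R_1$ and $R_2$ with weights $\int_a^c g\,d\mi$ and $\int_c^b g\,d\mi$, so it lies between $R_1$ and $R_2$, in particular it is $\geq R_2$. Symmetrically, $\int_c^d f/\int_c^d g$ is between $R_2$ and $R_3$, hence is $\leq R_2$. Chaining the two inequalities through $R_2$ yields the lemma.

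The main difficulty is not in the ideas but in the bookkeeping around degeneracies: some of $\int_{[a,c]}g$, $\int_{[c,b]}g$, $\int_{[b,d]}g$ may vanish (in which case, by $\supp f\subset\supp g$, the corresponding $\int f$ also vanishes and that subinterval drops harmlessly out of both sums), and the endpoint cases $a=c$ or $b=d$ have to be allowed. In each such case the decomposition either collapses to the disjoint case or reduces to a trivial weighted mean, so no new argument is needed; one only has to verify that the ``well defined'' hypothesis is preserved by the decomposition.
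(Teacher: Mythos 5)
The paper itself does not prove this lemma: the text immediately before Lemmas \ref{rosncalk} and \ref{rosnmix} states that ``the proof of the following two simple lemmas is given in \cite{malcin},'' so there is no in-paper proof to compare against. Judged on its own, your argument is correct and self-contained. The disjoint case via the pivot $r$ is sound: since $f/g$ is decreasing on $\supp g$ and $b\leq c$, $\inf_{[a,b]\cap\supp g} f/g \geq \sup_{[c,d]\cap\supp g} f/g$, so any $r$ in between gives $f\geq rg$ on $[a,b]$ and $f\leq rg$ on $[c,d]$ (trivially where $g=0$, since $\supp f\subset\supp g$), and integrating yields the claim. The mediant step is also correct: with $R_1\geq R_2\geq R_3$ from the disjoint case, the ratio over $[a,b]$ is the $g\,d\mi$-weighted average of $R_1,R_2$ (hence $\geq R_2$) and the ratio over $[c,d]$ is the weighted average of $R_2,R_3$ (hence $\leq R_2$); chaining gives the result. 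Your handling of degeneracies is right: if a sub-denominator $\int g\,d\mi$ vanishes, then $g=0$ $\mi$-a.e. there, hence $f=0$ $\mi$-a.e. there, and the sub-interval drops out, collapsing the chain without breaking the ``well defined'' hypothesis for the remaining ratios.

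One minor inaccuracy worth flagging, though it does not affect your proof since you do not rely on it: the parenthetical suggestion that the disjoint case follows from ``Lemma \ref{rosncalk} on $[a,d]$ with $h=\1_{[a,b]}$'' only compares the ratio on $[a,b]$ with the ratio on $[a,d]$; to finish one would also need the symmetric comparison of $[c,d]$ with $[a,d]$, which requires a reversed form of Lemma \ref{rosncalk} for increasing $h$ (or again the mediant/pivot argument). The direct $r$-pivot argument you actually use does not have this gap.
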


\section{A localization type lemma}

The idea given below is similar to the so--called localization lemma proven in the paper \cite{KLS}. This is the part which allows us to circumvent the transfinite induction used in the original proof in \cite{malcin}.

The crucial property of the class of Orlicz--based functions is that it is closed under the following transformations:

\begin{Def} Let $\F = (s,V,\Epsilon)$ be an oriented function $s$. Then we define the {\em sons} of $\F$ as follows:
\begin{itemize}
\item for $i \in \{1,2,\ldots,\dim V\}$ and a log--concave function $w : \R \ra \RO$ the triple $(\tilde{s},V,\Epsilon)$ is a son of $\F$, where $\tilde{s}(x) = s(x) \cdot w(x_i)$,
\item if $H$ is an affine hyperplane in $V$ given by the equation $x_i = ax_j + b$ for some non--negative $a,b$ and $i,j\in \{1,2,\ldots,\dim V\}$, then the triple $(\tilde{s},\tilde{H},\tilde{\Epsilon})$ is a son of $\F$, where $\tilde{H}$ is defined to be $H$ with the linear structure given by setting as the origin the point $x_i = b, x_k = 0$ for $k \neq i$; $\tilde{s}$ is the restriction of $s$ to $\tilde{H}$, and $\tilde{\Epsilon}$ is obtained from $\Epsilon$ by substituting $e_i$ and $e_j$ by $a e_i + e_j$,
\item if $x \in V_+$, then the triple $(\tilde{s},\tilde{V},\Epsilon)$ is a son of $\F$, where $\tilde{V}$ is $V$ with origin fixed at $x$, and $\tilde{s}$ is $s$ restricted to $\tilde{V}$.
\end{itemize}
We define the relation of being a {\em descendant} of an oriented function $\F$ as the smallest transitive and reflexive extension of the relation of being a son.
\end{Def}

\begin{lemma}
Let $\F = (s,V,\Epsilon)$ be an Orlicz--based function. Then any descendant of $\F$ will also be an Orlicz--based function.
\end{lemma}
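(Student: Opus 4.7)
The plan is to reduce to checking each of the three types of son in isolation; since being a descendant is the transitive closure of being a son, this reduction is automatic. Fix an Orlicz-based $\F = (s,V,\Epsilon)$ with data $m$, $(f_k)_{k=1}^n$, $(w_k)_{k=1}^n$. In each case I will exhibit new data $\tilde{m}$, $(\tilde{f}_k)$, $(\tilde{w}_k)$ on the appropriate space so that $\tilde{s}(y) = \tilde{m}\bigl(\sum_k \tilde{f}_k(y_k)\bigr) \prod_k \tilde{w}_k(y_k)$, using the standard stability properties of log-concavity (under products, translations, and affine substitutions) and of convexity (under sums and compositions with affine maps).

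For the first type of son (multiplication by a log-concave $w(x_i)$) nothing need be done beyond replacing $w_i$ by $w_i \cdot w$; the product of two log-concave functions supported on $\RO$ is again log-concave and supported on $\RO$. For the third type (origin shifted to some $x \in V_+$) I will set
\[
\tilde{w}_k(z) := w_k(z + x_k), \qquad \tilde{f}_k(z) := f_k(z + x_k) - f_k(x_k), \qquad \tilde{m}(t) := m\Bigl(t + \sum_{k} f_k(x_k)\Bigr).
\]
Each $\tilde{w}_k$ is log-concave on $\RO$ after restricting from its natural domain $[-x_k, \infty)$; each $\tilde{f}_k$ is an increasing convex function vanishing at $0$, hence again a Young function; and $\tilde{m}$, being $m$ shifted by a nonnegative constant, remains log-concave and compactly supported, is nonincreasing on $\RO$ (so it attains its maximum at $0$), and has $\tilde{m}(\infty) = 0$. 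Direct substitution then gives $\tilde{s}(y) = s(y + x)$.

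The essential work is the second type, restriction to the hyperplane $H = \{x_i = a x_j + b\}$. In the rebased $\tilde{H}$ the new $j$-th coordinate $y_j$ parameterizes both old coordinates simultaneously, via $(x_i, x_j) = (ay_j + b,\, y_j)$. I will keep $\tilde{f}_k := f_k$ and $\tilde{w}_k := w_k$ for $k \neq i, j$, and on the $j$-th slot set
\[
\tilde{w}_j(y) := w_i(ay + b)\, w_j(y), \quad \tilde{f}_j(y) := f_i(ay + b) + f_j(y) - f_i(b), \quad \tilde{m}(t) := m\bigl(t + f_i(b)\bigr).
\]
Because $a, b \geq 0$, the map $y \mapsto ay + b$ sends $\RO$ into $\RO$, so $w_i(ay + b)$ is log-concave and $f_i(ay + b)$ is convex; hence $\tilde{w}_j$ is log-concave on $\RO$ as a product, and $\tilde{f}_j$ is increasing, convex, and vanishes at $0$. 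Substitution into the defining formula for $s$ then produces precisely the Orlicz-based expression for $\tilde{s}$ in the new data.

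The main obstacle, mild but ubiquitous, is the bookkeeping that forces $\tilde{f}_j(0) = 0$ (and $\tilde{f}_k(0) = 0$ in case 3): each affine substitution introduces a nonzero additive constant into the argument of $m$, which must be absorbed back by a shift, after which one has to verify that the shifted $m$ retains log-concavity, compact support, the maximum-at-$0$ property, and the condition $m(\infty) = 0$. The degenerate subcases $f_i \equiv \infty$ or $f_i(b) = \infty$ in case 2 collapse $\tilde{s}$ to the zero function, for which any valid Orlicz-based data trivially suffices.
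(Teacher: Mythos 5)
Your proposal is correct and takes essentially the same route as the paper: the same reduction to one son at a time, and the same explicit substitution formulas for the new data $(\tilde{m}, \tilde{f}_k, \tilde{w}_k)$ in each of the three cases. The only cosmetic difference is that the paper absorbs the degenerate $f_i(b) = \infty$ situation into the formula via the convention $\infty - \infty = \infty$, while you treat it as a separate trivial subcase; both are fine.
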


\begin{proof}
It suffices to consider any son $\F'$ of $\F$. Let the functions $f_i$, $w_i$ and $m$ give a representation of $\F$ as an Orlicz--based function. We shall define the functions $f_i'$, $w_i'$ and $m'$ which represent $\F'$ as an Orlicz--based function. In the first case of the definition (where $\F'$ is created by multiplying $w_i$ by $w$) we take $f_i' = f_i$, $m' = m$ and $w_j' = w_j$ for $j \neq i$, while $w_i' = w \cdot w_i$, and the product of log--concave functions is log--concave. In the second case it suffices to replace $f_i$ and $f_j$ by a single function $f_i(a x + b) + f_j(x) - f_i(b)$ (we assume $\infty - \infty = \infty$), analogously substitute $w_i$ and $w_j$ by a single $w$ and put $m'(x) = m(x + f_i(b))$ to get a representation of $\F'$ as an Orlicz--based function. In the third case, we take $f_i'(t) = f_i(t + x_i) - f_i(x_i)$, $w_i'(t) = w_i(t + x_i)$ and $m'(t) = m(t + \sum f_i(x_i))$.
\end{proof}

We will begin by proving two auxilliary lemmas:

\begin{lemma}\label{krem}
Let $K$ be a convex non--empty compact set in $\R^n$ with a non--empty interior, $K' \subset K$ a convex, compact subset of dimension $k \leq n$ and let $l$ be an affine subspace of dimension $k$ spanned by $K'$. Let $K_m$ be a descending sequence of compact, convex subsets of $K$ with non--empty interiors satisfying $K' = \bigcap_{m=1}^\infty K_m$.
Let $g_m(x)  = \lambda_{n-k}(K_m \cap P^{-1}(x)) \slash \lambda_n(K_m)$ for $x \in l$, where $P$ is the orthogonal projection to $l$. Then there exists a subsequence $K_{m_i}$ of $K_m$ and a log--concave function $g$ with support $K'$ such that $g_{m_i}$ converges almost uniformly to $g$ on $\Int_l K'$ and $\int_{K'} g = 1$. Moreover the family $g_m$ is uniformly bounded on $l$.
\end{lemma}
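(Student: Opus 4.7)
The plan is to exploit the log--concavity of $g_m$. By Brunn's theorem, the slice volume $h_m(x):=\lambda_{n-k}(K_m\cap P^{-1}(x))$ has $h_m^{1/(n-k)}$ concave on $P(K_m)$; integrating, $\int_l h_m=\lambda_n(K_m)=:c_m$, so $g_m=h_m/c_m$ is a log--concave probability density on $l$. Since $K_m$ is a decreasing sequence of compacts with intersection $K'$, we have $P(K_m)\to K'$ in Hausdorff distance.

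The key step---and the main obstacle---is uniform boundedness. Fix $x_0\in\Int_l K'$ with $B_l(x_0,r)\subset K'\subset P(K_m)$, and let $D$ be the diameter of $P(K)$. Two separate arguments produce the bound. (i) A convexity argument for $K_m$: for $|v|\leq r/2$, the midpoint of any point of $K_m$ projecting to $x_0$ and of $x_0+2v\in K'\subset K_m$ lies in $K_m$ and projects to $x_0+v$, giving $h_m(x_0+v)\geq 2^{-(n-k)}h_m(x_0)$; integrating over $B_l(x_0,r/2)$ yields $c_m\geq C_0\, h_m(x_0)$, hence $g_m(x_0)\leq C'$. (ii) Concavity of $\phi_m:=h_m^{1/(n-k)}$: for any $z\in P(K_m)$, let $y_2\in\partial P(K_m)$ be the exit of the ray from $z$ through $x_0$ (with $|x_0-y_2|\geq r$); writing $x_0$ as a convex combination of $y_2$ and $z$ and using $\phi_m(y_2)\geq 0$,
\[
\phi_m(z)\leq\phi_m(x_0)\cdot\frac{|z-y_2|}{|x_0-y_2|}\leq\phi_m(x_0)\cdot\frac{D}{r},
\]
so $g_m(z)\leq(D/r)^{n-k}g_m(x_0)\leq C$ uniformly in $m$ and $z$. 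Applying (ii) at any other $x\in\Int_l K'$ in place of $x_0$ reverses to give $g_m(x)\geq c_x>0$, uniform on compact subsets of $\Int_l K'$ (where the in--radii in $K'$ are bounded below).

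Given these bounds, $\{-\log g_m\}$ is a family of convex functions, finite and locally uniformly bounded on the open convex set $\Int_l K'$. A standard compactness result for convex functions (locally bounded on an open convex set implies subsequential convergence, uniformly on compact subsets) extracts a subsequence with $g_{m_i}\to g$ uniformly on compact subsets of $\Int_l K'$, hence almost uniformly, with $g$ log--concave and bounded by $C$. Finally, $\supp g\subset K'$ follows from $P(K_m)\to K'$, and $\supp g\supset K'$ from the uniform lower bound (so $g>0$ on $\Int_l K'$, whose closure is $K'$); for $\int g=1$, write $1=\int g_{m_i}=\int_{C_j}g_{m_i}+\int_{K'\setminus C_j}g_{m_i}+\int_{P(K_{m_i})\setminus K'}g_{m_i}$ for a compact exhaustion $C_j\subset\Int_l K'$, so that uniform convergence handles the first piece and the uniform bound $g_m\leq C$ together with $\lambda_k(K'\setminus C_j)\to 0$ and $\lambda_k(P(K_{m_i})\setminus K')\to 0$ handles the remaining two.
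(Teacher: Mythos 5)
Your proof is correct, and it reaches the lemma by a genuinely different route, though both rest on the same core input: Brunn's theorem (or the Brunn--Minkowski inequality) giving $(n-k)$-root concavity of the slice function $h_m$, and an Arzel\`a--Ascoli-type compactness at the end. Where you diverge is in (a) how you establish the uniform bound and (b) how you organize the convergence. For (a), the paper integrates the pointwise bound $g_m(x)\geq T_m(1-\phi_m(x))^{n-k}$ via the layer-cake formula to obtain $1\geq T_m\,c_{n,k}\lambda_k(K')$, an explicit bound depending only on $\lambda_k(K')$; you instead combine a local midpoint/convexity argument at a fixed interior point $x_0$ (giving $g_m(x_0)\leq C'$) with the concavity-along-rays estimate $\phi_m(z)\leq\phi_m(x_0)\,D/r$ to propagate it. Both are clean; the paper's constant is slightly more geometric, while your ray argument has the bonus of ``reversing'' to give the uniform \emph{lower} bound on compacts, which the paper never bothers to prove but which you need. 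For (b), the paper directly establishes a modulus of equicontinuity for the $g_m$ themselves (using $z\in K'$ with $\Gamma y=(\Gamma-1)x+z$ and the concavity of $g_m^{1/(n-k)}$) and then applies Arzel\`a--Ascoli plus diagonalization; you instead use your two-sided bounds to show $\{-\log g_m\}$ is locally uniformly bounded and invoke the standard compactness theorem for convex functions. This is a legitimate black-box substitute (under the hood that theorem proves local Lipschitz equicontinuity and runs the same diagonal argument), so it buys you brevity at the cost of quoting a nontrivial result rather than proving the needed equicontinuity from the specific structure at hand. Finally, your argument for $\int_{K'}g=1$ uses an explicit three-piece decomposition and the volume convergence $\lambda_k(P(K_{m_i})\setminus K')\to 0$; the paper just applies dominated convergence with the uniform bound as majorant, together with $\int_{K'}g_m\to 1$. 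One small point you should make explicit: to conclude $g_m(x)\geq c_x>0$ from the reversed ray estimate you need $\max g_m$ bounded below, which follows from $\int g_m=1$ and $\lambda_k(P(K_m))\leq\lambda_k(P(K))<\infty$; that line is implicit but unstated.
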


\begin{proof}
If $k = n$, $g_m$ converge uniformly on $K'$ to $\1_{K'} \slash \lambda_n(K')$. Further on we assume $k < n$.

Let $T_m$ denote the maximum of $g_m$ on $l$, suppose it is attained at the point $O$. Let $\phi_m$ be the Minkowski functional on $l$ given by $\supp g_m$, where $O$ is taken to be the origin (that is $\phi_m(x) = \inf \{\lambda : O + (x-O) \slash \lambda \in \supp g_m\}$). The function $g_m$ is a density of a projection of a uniform measure on an $n$--dimensional convex set to a $k$--dimensional subspace, thus by the Brunn--Minkowski inequality (see eg. \cite{ff}) $\sqrt[n-k]{g_m}$ is concave on its support, thus if $\phi_m(x) \leq 1$, then $g_m(x) \geq T_m(1 - \phi_m(x))^{n-k}$. Further on
\begin{align*} 1 &= \int_{l} g_m(x) d\lambda_k (x) = \int_0^{T_m} \lambda_k \{x : g_m(x) \geq t\} dt \geq \int_0^{T_m} \lambda_k \{x : T_m(1 - \phi_{m}(x))^{n-k} \geq t\} 
\\ & = \int_0^{T_m} \lambda_k \Bigg\{x : \phi_{m}(x) \leq 1 - \Big(\frac{t}{T_m}\Big)^{1\slash (n-k)}\Bigg\} dt = \int_0^{T_m} \lambda_k \Bigg(\bigg(1 - \Big(\frac{t}{T_m}\Big)^{1\slash (n-k)}\bigg)  \supp g_m\Bigg) dt 
\\ & = \int_0^{T_m} \bigg(1 - \Big(\frac{t}{T_m}\Big)^{1\slash (n-k)}\bigg)^k \lambda_k(\supp g_m) dt = {T_m} \lambda_k(\supp g_m) \int_0^1 (1 - s^{1 \slash (n-k)} )^k ds
\\ &  = {T_m}\lambda_k(\supp g_m) c_{n,k} \geq {T_m}\lambda_k(K')c_{n,k}.\end{align*}
Hence $1 \geq T_m \lambda_k(K') c_{n,k}$, thus $T_m \leq 1 \slash (c_{n,k} \lambda_k(K'))$, and so the sequence $g_m$ is uniformly bounded. 

Our aim is to apply the Arzeli--Ascoli Theorem, so we need to prove the almost uniform equicontinuity, meaning uniform equicontinuity on any compact subset of $\Int_l K'$. Let us consider any compact subset $L$ of $\Int_l K'$, from compactness we can choose such a $\delta$, that for any $x \in L$ and $z\notin K'$ we have $|x-z| > \delta$. 

Fix $\Gamma > 1$. For any $x,y \in L$ with $|x-y| < \delta \slash \Gamma$ we can choose such a $z \in K'$, that $\Gamma y = (\Gamma - 1) x + z$. Then
$$g_m(x)^{\frac{1}{n-k}} - \frac{T_m^{\frac{1}{n-k}}}{\Gamma} \leq \frac{\Gamma - 1}{\Gamma} g_m(x)^{\frac{1}{n-k}} \leq \frac{\Gamma - 1}{\Gamma} g_m(x)^{\frac{1}{n-k}} + \frac{1}{\Gamma} g_m(z)^{\frac{1}{n-k}} \leq g_m(y)^{\frac{1}{n-k}},$$
and thus 
\begin{align*} g_m(y) - g_m(x) & \leq \frac{T_m^{\frac{1}{n-k}}}{\Gamma}\Big(g_m(y)^{\frac{n-k-1}{n-k}} + g_m(y)^{\frac{n-k-2}{n-k}}g_m(x) + \ldots + g_m(x)^{\frac{n-k-1}{n-k}}\Big) \\
& \leq \frac{(n-k)T_m}{\Gamma} \leq \frac{n-k}{\Gamma c_{n,k} \lambda_k(K')}.\end{align*}
This expression is independent of $m$, $x$ and $y$, and by choosing an appropriately large $\Gamma$ we can make it arbitrarily small, thus indeed the sequence $(g_m)$ on $L$ is uniformly equicontinuous. Thus by the Arzeli--Ascoli theorem we can choose a subsequence of $g_m$ uniformly convergent on $L$. By choosing a sequence of $L$s increasing to $K'$ we can diagonally construct a subsequence $(g_{m_i})$ almost uniformly convergent on $K'$.  Let $g$ be the limit of $g_{m_i}$ on $\Int_l K'$, extended outside by 0. As $g_m$ were uniformly bounded, by the Lebesgue majorized convergence theorem we have $\int_{K'} g_m \ra \int_{K'} g$, while as $\bigcap K_m = K'$ and $g_m$ are uniformly bounded, $\int_{K'} g_m \ra 1$, thus $\int_{K'} g = 1$. All $g_m$s are log--concave, thus by a simple limit argument $g$ is also log--concave on $\Int_l K'$, which ends the proof.
\end{proof}

\begin{lemma}\label{czastka}
Let $K$ be a compact convex set in $\R^n$ and $K' \subset \R^k$ its convex, compact subset containing a point from the interior of $K$. Let $K_m$ be a decreasing sequence of convex compact subsets of $\R^k \cap K$ of dimension $k$ such that $\bigcap K_m = K'$. Then there exists a log--concave measure $\nu$ on $K'$ and a subsequence $K_{m_i}$ of $K_m$ such that for any continuous function $f$ on $K$ we have
$$\frac{\int_{(K_{m_i} \times \R^{n-k}) \cap K} f(x) d\lambda_n(x)}{\lambda_n((K_{m_i} \times \R^{n-k}) \cap K)} \ra_{m \ra \infty} \frac{\int_{(K' \times \R^{n-k}) \cap K} f(x) d\nu \otimes d\lambda_{n-k}(x)}{\nu \otimes \lambda_{n-k} ((K' \times \R^{n-k}) \cap K)}.$$
\end{lemma}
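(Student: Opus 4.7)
Let $P : \R^n \to \R^k$ be the orthogonal projection onto the first $k$ coordinates and set
\[
Q(x') := \lambda_{n-k}\bigl(\{y \in \R^{n-k} : (x',y)\in K\}\bigr), \qquad F(x') := \int_{\{y:(x',y)\in K\}} f(x',y)\, d\lambda_{n-k}(y).
\]
Both functions are bounded (since $K$ is compact), $Q$ is log--concave with $\supp Q = P(K)$ by Brunn--Minkowski, and $F$ is continuous on $\Int P(K)$. Fubini converts the ratio in the lemma into $\int_{K_m} F\, d\lambda_k \big/ \int_{K_m} Q\, d\lambda_k$, and converts the proposed limit into $\int_{K'} F\, d\nu \big/ \int_{K'} Q\, d\nu$. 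So it suffices to construct a log--concave probability measure $\nu$ on $K'$ such that, along a subsequence,
\[
\frac{1}{\lambda_k(K_m)} \int_{K_m} G\, d\lambda_k \;\longrightarrow\; \int_{K'} G\, d\nu
\]
for every bounded $G$ continuous on $\Int P(K)$; applying this with $G=F$ and $G=Q$ and dividing closes the argument.

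\textbf{Step 2: Produce $\nu$ from Lemma \ref{krem}.} Let $l$ be the affine span of $K'$, $k' := \dim K'$, and $P_l : \R^k \to l$ the orthogonal projection. Lemma \ref{krem}, applied in $\R^k$ to the descending family $K_m$, delivers a subsequence (still denoted $K_m$) and a log--concave $g$ on $l$ with $\supp g = K'$ and $\int g\, d\lambda_{k'} = 1$, such that the densities
\[
\rho_m(u) := \frac{\lambda_{k-k'}\bigl(K_m \cap P_l^{-1}(u)\bigr)}{\lambda_k(K_m)}
\]
are uniformly bounded and converge to $g$ almost uniformly on $\Int_l K'$. Take $\nu := g\cdot \lambda_{k'}|_{K'}$.

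\textbf{Step 3: Averaging argument.} Decompose $\R^k = l \oplus l^\perp$, write $x' = u+w$, and set $S_m(u) := \{w \in l^\perp : u+w \in K_m\}$. Fubini yields
\[
\frac{1}{\lambda_k(K_m)} \int_{K_m} G\, d\lambda_k \;=\; \int_l \rho_m(u)\, A_m^G(u)\, d\lambda_{k'}(u), \qquad A_m^G(u) := \frac{1}{\lambda_{k-k'}(S_m(u))} \int_{S_m(u)} G(u+w)\, dw.
\]
For any $u \in \Int_l K' \cap \Int P(K)$, convexity of the $K_m$ together with $\bigcap K_m = K'$ forces $\operatorname{diam} S_m(u) \to 0$, so continuity of $G$ at $u$ gives $A_m^G(u) \to G(u)$. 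Combined with $\rho_m \to g$ almost uniformly on compact subsets of $\Int_l K'$ and the uniform $L^\infty$--bounds on $\rho_m$ (Lemma \ref{krem}) and $A_m^G$ (since $G$ is bounded, with common compact support coming from $K_1$), dominated convergence delivers the limit $\int_{K'} G \cdot g\, d\lambda_{k'} = \int_{K'} G\, d\nu$.

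\textbf{Main obstacle.} The delicate point is that $G$ (notably $Q$ itself) may be discontinuous on $K' \cap \partial P(K)$, and that $\operatorname{diam} S_m(u)$ need not shrink uniformly near the boundary of $K'$. The hypothesis that $K'$ meets $\Int K$ implies $K' \cap \Int P(K) \neq \emptyset$ (interior points of $K$ project to interior points of $P(K)$), so the bad set $K' \setminus \Int P(K)$ is contained in a proper convex subset of $K'$ inside $l$ and therefore has $\nu$--measure zero, since $\nu$ admits a density with respect to $\lambda_{k'}$. Uniformity of $\operatorname{diam} S_m(u) \to 0$ on compact subsets of $\Int_l K' \cap \Int P(K)$ follows from a routine Dini--type argument exploiting monotonicity in $m$ and upper semicontinuity in $u$.
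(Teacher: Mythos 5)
Your proof is correct, and it takes a genuinely different route from the paper's. Rather than comparing $\1_K$ with $\1_K\circ P$ in $\R^n$ and bounding the measure of the discrepancy set by trapping it between the homothetics $K_\eps^-$ and $K_\eps^+$ (the paper's device), you integrate out the $\R^{n-k}$ factor at the outset via Fubini, producing the slice--volume function $Q$ and the weighted slice function $F$ on $P(K)\subset\R^k$, and thereby reduce to the purely $k$--dimensional assertion that $\lambda_k(K_m)^{-1}\int_{K_m}G\,d\lambda_k \to \int_{K'}G\,d\nu$ for any bounded $G$ continuous on $\Int P(K)$; this last statement follows from Lemma \ref{krem} plus a standard fiber--averaging and dominated--convergence argument. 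The cost of the Fubini--first shortcut is that $Q$ and $F$ may fail to be continuous on $\partial P(K)$, so you must check the bad set is $\nu$--negligible; the key observation $\Int_l K'\subset\Int P(K)$ (which follows from $K'$ meeting $\Int K$ via a supporting--hyperplane argument) handles this. One small inaccuracy: $K'\setminus\Int P(K)$ is contained in the relative boundary $\partial_l K'$, which is not generally a \emph{convex} proper subset as you claim, but it is $\lambda_{k'}$--null, which is all that is needed since $\nu\ll\lambda_{k'}$. The paper's $K_\eps^\pm$ bookkeeping avoids confronting the boundary discontinuity directly, giving a quantitative bound instead of a measure--zero argument; your route is shorter and more transparent, while the paper's is somewhat more self--contained. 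Also, the uniformity of $\operatorname{diam}S_m(u)\to 0$ that you derive via a Dini--type argument is not actually needed: pointwise convergence a.e. together with your $L^\infty$ bounds already suffices for dominated convergence.
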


\begin{proof}
Let $l$ be the affine subspace spanned by $K'$, denote $p = k - \dim l$. 
Let $P$ be the orthogonal projection from $\R^n$ onto $l \times \R^{n-k}$ and denote $g_m(x) = \lambda_p(K_m \cap P^{-1}(x)) \slash \lambda_k(K_m)$ for $x \in l$. Choose a sequence $m_i$ and a function $g$ on $K'$ according to Lemma \ref{krem} so that $g_{m_i}$ converges to $g$ almost uniformly on $\Int_l K'$. For simplicity we pass to the subsequence and assume $g_m$ converges almost uniformly to $g$ on $\Int_l K'$.

Let $A_\delta = (\R^n \setminus K) + \delta B_2^n$, $B_\delta = K + \delta B_2^n$. Notice that if $\1_K(v) \neq \1_K(P(v))$, then $v \in A_{|v - P(v)|} \cap B_{|v - P(v)|}$. Let $O \in K' \cap \Int K$. Fix an $\eps > 0$. Consider the set $K_\eps^- = (1+\eps)^{-1} \big(K - \{O\}\big) + \{O\}$, that is the homothetic image of $K$ with scale $1 \slash (1 + \eps)$ and origin $O$. It is a compact set contained in the interior of $K$, thus for some $\delta_- > 0$ we have $K_\eps^- \cap A_{2\delta_-} = \emptyset$. Similarly if we consider $K_\eps^+ = (1+\eps)\big(K - \{O\}\big) + \{O\}$, then for some $\delta_+ > 0$ we have $K_\eps^+ \supset B_{2\delta_+}$. Let $\delta = \min\{\delta_-,\delta_+\}$, so that if $\1_K(v) \neq \1_K(P(v))$ and $|P(v) - v| < \delta$, then $v \in K_\eps^+ \setminus K_\eps^-$.

Take $m$ so large that for any $x \in K_m$ we have $|x-P(x)| < \delta$. 
Consider $\tilde{K} = (K_m \times \R^{n-k}) \cap K_\eps^-$. For any point of that set $\1_K(v) = \1_K(P(v)) = 1$. On the other hand the set of all points in $K_m \times \R^{n-k}$, for which $\1_K(v) \neq \1_K(P(v))$ is contained in $K_\eps^+ \setminus K_\eps^-$, and thus in particular is contained in $\big((1+\eps)^2 (\tilde{K} - \{O\}) + \{O\}\big) \setminus \tilde{K}$ (here we use the fact that $\tilde{K}$ is convex and contains $O$), so $$\frac{\lambda_n\{x \in K_m\times \R^{n-k} : \1_K(x) \neq \1_K(P(x))\}}{ \lambda_n((K_m \times \R^{n-k}) \cap K)} \leq (1+\eps)^2 - 1,$$ and so the quotient is arbitrarily small for sufficiently large $m$. Now
\begin{align*}
& \frac{\int_{(K_m \times \R^{n-k}) \cap K} f(x) d\lambda_n (x)}{\lambda_n((K_m \times \R^{n-k}) \cap K)} =
\frac{\int_{K_m \times \R^{n-k}} f(x) \1_K(x) d\lambda_n (x)}{\lambda_n((K_m \times \R^{n-k}) \cap K)} \\
& =
\frac{\int_{K_m \times \R^{n-k}} f(x) \1_K(P(x)) d\lambda_n (x)}{\lambda_n((K_m \times \R^{n-k}) \cap K)} + \frac{\int_{K_m \times \R^{n-k}} f(x) (\1_K(x) - \1_K(P(x))) d\lambda_n (x)}{\lambda_n((K_m \times \R^{n-k}) \cap K)}.
\end{align*}
The second summand can be bounded by $\sup f$ times $\lambda_n\{x \in K_m\times \R^{n-k} : \1_K(x) \neq \1_K(P(x))\} \slash \lambda_n((K_m \times \R^{n-k}) \cap K)$, and so it converges to zero. We have to bound the first summand. Let
\begin{align*}
I_m & := \frac{\int_{K_m \times \R^{n-k}} f(v) \1_K(P(v)) d\lambda_n (v)}{\lambda_n((K_m \times \R^{n-k}) \cap K)} \\  
= &\frac{\int_{\R^{n-k}} \int_{l} \1_K(x,y,0) \lambda_{p} (P^{-1}(y) \cap K_m) \big[ \int_{P^{-1}(y) \cap K_m} \frac{f(x,y,z)}{\lambda_{p} (P^{-1}(y) \cap K_m)} d\lambda_{p} (z)\big] d\lambda_{k-p} (y) d\lambda_{n-k} (x)}{\lambda_n((K_m \times \R^{n-k}) \cap K)},
\end{align*}
where by $P^{-1}(y)$ we mean the counterimage of $y \in l \subset \R^n$ with respect to the projection $P$.
Let $$f_m(x,y) = \int_{P^{-1}(y) \cap K_m} \frac{f(x,y,z)}{\lambda_{p} (P^{-1}(y) \cap K_m)} d\lambda_{p} (z).$$ The function $f_m$ is an average of $f$ on the set $P^{-1}(y) \cap K_m$. Recall $f$ is continuous on $K$, so it is uniformly continuous, and the diameter of the set $P^{-1}(y) \cap K_m$ converges uniformly (with respect to $y$) to zero with $m \ra \infty$, thus $f_m(x,y)$ converges uniformly to $f(x,y,0)$. By inserting the definition of $g_m$ we get
$$
I_m = \frac{\lambda_k(K_m)}{\lambda_n((K_m \times \R^{n-k}) \cap K)}
\int_{\R^{n-k}} \int_{l}  \1_K(x,y,0) g_m(y) f_m(x,y) d\lambda_{k-p} (y) d\lambda_{n-k} (x).
$$
The functions $f_m$  and $g_m$ are uniformly bounded and $\lambda_{k-p}(\supp g_m \setminus K') \ra 0$, thus $$\int_{\R^{n-k}} \int_{l \setminus K'} \1_K(x,y,0) g_m(y) f_m(x,y) d\lambda_{k-p}(y) d\lambda_{n-k}(x)$$ converges to zero. We have to estimate
$$
\frac{\lambda_k(K_m)}{\lambda_n((K_m \times \R^{n-k}) \cap K)}
\int_{(K' \times \R^{n-k}) \cap K} g_m(y) f_m(x,y) d\lambda_{k-p} (y) d\lambda_{n-k} (x).
$$
Both $g_m$ and $f_m$ are uniformly bounded and almost uniformly convergent, thus $g_m(y) f_m(x,y)$ converges almost uniformly to  $g(y) f(x,y,0)$ on $K' \times \R^{n-k}$. Let $\nu$ denote the measure on $l$ with density $g \1_{K'}$. Then
\begin{align*}\int_{(K' \times \R^{n-k}) \cap K} g_m(y) f_m(x,y) d\lambda_{k-p}(y) d\lambda_{n-k}(x) & \ra_{m\ra \infty} \int_{(K' \times \R^{n-k}) \cap K} g(y) f(x,y,0) d\lambda_{k-p}(y) d\lambda_{n-k}(x) \\ = & \int_{(K' \times \R^{n-k}) \cap K} f(x,y) d\lambda_{n-k}(x) \otimes d\nu(y).\end{align*}

Moreover 
\begin{align}
\nonumber & \frac{\lambda_n((K_m \times \R^{n-k}) \cap K)}{\lambda_k(K_m)}  = 
\frac{\int_{K_m \times \R^{n-k}} \1_K(v) d\lambda_n(v)}{\lambda_k(K_m)} 
 \\ = & \frac{\int_{K_m \times \R^{n-k}} \1_K(P(v))d\lambda_n(v)}{\lambda_k(K_m)} +  \frac{\int_{K_m \times \R^{n-k}} \1_K(v) - \1_K(P(v))d\lambda_n(v)}{\lambda_n((K_m \times \R^{n-k}) \cap K)}\frac{\lambda_n((K_m \times \R^{n-k}) \cap K)}{\lambda_k(K_m)}\label{ddfa} \end{align}
Notice that 
$$
\frac{\lambda_n((K_m \times \R^{n-k}) \cap K)}{\lambda_k(K_m)} \leq \frac{\lambda_n(K_m \times P_{n-k}(K))}{\lambda_k(K_m)} = \lambda_{n-k}(P_{n-k}(K)),
$$
where $P_{n-k}$ is the orthogonal projection onto $\R^{n-k}$. Thus in the second summand of (\ref{ddfa}) the second fraction is bounded, while the first converges to zero, which we already proved. As to the first summand, 
\begin{align*}
&\frac{\int_{K_m \times \R^{n-k}} \1_K(P(v))d\lambda_n(v)}{\lambda_k(K_m)} = \int_{l \times \R^{n-k}} g_m(x) \1_K(x,y,0) d\lambda_{k-p}(x) d\lambda_{n-k}(y) \\
 \ra_{m \ra \infty}& \int_{l \times \R^{n-k}} g(x) \1_K(x,y,0) d\lambda_{k-p}(x) d\lambda_{n-k}(y) = 
\nu \otimes \lambda_{n-k} ((K' \times \R^{n-k}) \cap K),
\end{align*}
where the convergence follows from the almost uniform convergence of the integrand, which ends the proof of the Lemma. 
\end{proof}

The following definitions will be useful:
\begin{Def} A set $K \subset \R^2$ is called {\em spanned} by the points $a,b$ if $K$ is convex, compact, $a,b\in K$, and for any $x \in K$ we have $a \sleq x \sleq b$. A set is called spanned if it is spanned by some two points $a,b$.\end{Def}
Geometrically this definition means that $K$ is convex, compact and if we inscribe $K$ in a rectangle with edges parallel to the coordinate axes, then the lower left corner and the upper right corner of the rectangle are contained in $K$.

\begin{Def}
For a linear space $V$ with a basis $\Epsilon$ by a {\em splitting} of $V$ with respect to $\Epsilon$ we mean such a decomposition $V = V_1 \oplus V_2$ and $\Epsilon = \Epsilon_1 \cup \Epsilon_2$ that $\Epsilon_i$ is a basis of $V_i$.
\end{Def}

\begin{Def}
Consider an Orlicz--based function $\F = (s,V,\Epsilon)$ and functions $f,g : V \ra \R$. We shall say that $\F,f$ and $g$ {\em satisfy the $\Theta$ condition}, if for any splitting $V = V_1 \oplus V_2$ with respect to $\Epsilon$ and any $0 \sleq x \sleq y \in V_1$ we have
\begin{equation} \label{zalozenieKLS} 
\frac{\int_{V_2} f(x,z) s(z) d\lambda_{n-k}(z)}{\int_{V_2} g(x,z) s(z) d\lambda_{n-k}(z)} \geq \frac{\int_{\R^{n-k}} f(y,z) s(z) d\lambda_{n-k}(z)}{\int_{\R^{n-k}} g(y,z) s(z) d\lambda_{n-k}(z)},\end{equation} 
whenever both sides are well--defined, where $n$ denotes $\dim V$ and $k$ --- $\dim V_1$.

We shall say that an Orlicz--based function $\F$ and functions $f,g$ satisfy the {\em hereditary $\Theta$ condition} if any descendant $\F' = (s', V', \Epsilon')$ of $\F$ and the restrictions of $f$ and $g$ to the $V'$ satisfy the $\Theta$ condition.
\end{Def}

\begin{lemma}\label{transfin}
Consider an Orlicz--based function $\F = (s,\R^n,\Epsilon)$, where $\Epsilon$ is the standard basis in $\R^n$, and three continuous functions --- $f,g : \R^n \ra [0,M]$ and $h : \R^n \ra [\eps,M]$ for some $M > \eps > 0$, where $\{f > 0\}\subset \{g > 0\}$, $\int_{\R^n} s(x) dx = 1$ and $\supp s \subset \supp g$. Assume that $\M,f$ and $g$ satisfy the hereditary $\Theta$ condition.
Additionally assume that
\begin{equation} \label{ogolnaKLS} \frac{\int_{\RO^n} f(z) h(z) s(z) dz}{\int_{\RO^n} g(z) h(z) s(z) dz} < \frac{\int_{\RO^n} f(z) s(z) dz}{\int_{\RO^n} g(z) s(z) dz}.
\end{equation}
Then there exist two different points $a \sleq b \in \R^n$ and a log--concave measure $\nu$ on the interval $I = [a,b]$, such that
$$\frac{\int_{\RO^n} f(x) s(x) dx}{\int_{\RO^n} g(x) s(x) dx} = \frac{\int_I f(x) s(x) d\nu(x)}{\int_I g(x) s(x) d\nu(x)} > \frac{\int_I f(x) h(x) s(x) d\nu(x)}{\int_I g(x) h(x) s(x) d\nu(x)}.$$ In particular $h$ cannot be coordinate--wise non--increasing.
\end{lemma}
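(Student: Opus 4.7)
The lemma is a Kannan--Lov\'asz--Simonovits-type localization statement, so I would prove it by iteratively descending to sons of $\F$ of strictly smaller effective dimension that inherit the violation, stopping when the support is a one-dimensional segment. Writing $r=\int fs/\int gs$ and $A=\int fs$, $B=\int gs$, $C=\int fhs$, $D=\int ghs$, the hypothesis (\ref{ogolnaKLS}) reads $AD>BC$ while trivially $A/B=r$.

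The core move is a single reduction step: given a descendant $\F'=(s',V',\Epsilon')$ of $\F$ of effective dimension $\ge 2$ whose four integrals satisfy $A'/B'=r$ and $A'D'>B'C'$, I would produce a son of strictly smaller effective dimension with the same two properties. Pick any coordinate $i$ and consider the two-parameter family of type-1 sons obtained by multiplying $s'$ by the log-concave slab indicator $\1_{\{x_i\in[\alpha,\beta]\}}$. The hereditary $\Theta$ assumption, combined with Lemma~\ref{rosnmix}, makes the slab ratio $\int fs'/\int gs'$ jointly monotone in $\alpha$ and in $\beta$, so the locus $\{(\alpha,\beta):\text{slab ratio}=r\}$ is a continuous monotone curve in the $(\alpha,\beta)$-quadrant, starting at $(0,\infty)$ and terminating at a diagonal point $(\tau,\tau)$ at which the conditional ratio $R(\tau)$ equals $r$. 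Following this curve shrinks the slab while preserving $A/B=r$; as $\alpha,\beta\to\tau$, Lemma~\ref{czastka} extracts a log-concave limit density on the hyperplane $\{x_i=\tau\}$, realized as a type-2 son of $\F'$ further weighted by a type-1 log-concave factor coming from the limit density. The limit descendant has effective dimension exactly one less than $\F'$.

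Iterating the reduction at most $\dim V-1$ times yields a descendant effectively supported on a line segment $I=[a,b]$ with $a\le b$, whose accumulated log-concave weight is the measure $\nu$ of the conclusion. The ``In particular'' sentence then follows by applying Lemma~\ref{rosncalk} to $f/g$ (non-increasing along the positively oriented $I$ by the $\Theta$ condition) and any hypothetical coordinate-wise non-increasing $h$.

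The main obstacle is guaranteeing that the strict violation $A'D'>B'C'$ actually survives the slab-shrinking, since the quantity $A'D'-B'C'$ is a priori not monotone along the ratio-$r$ curve. My plan for this is to accompany the shrinkage with a second IVT that balances the $gs$-conditional average of $h$ across the cut: pick a splitting point $c\in[\alpha,\beta]$ with $D^-/B^-=D^+/B^+$, where $\pm$ denote the two pieces of the slab split at $x_i=c$. With this balance, $A/B$ and $C/D$ become weighted averages of $A^\pm/B^\pm$ and $C^\pm/D^\pm$ with \emph{identical} weights $B^\pm/B=D^\pm/D$, so $A/B>C/D$ forces a violation on at least one of the two halves, which we retain as the next descendant. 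Arranging this simultaneous balancing inside the allowed son operations and carrying out the limit passage rigorously via Lemmas~\ref{krem} and~\ref{czastka}, together with the bounds $h\in[\eps,M]$ that prevent degeneration of strict inequality, is the delicate technical heart of the argument.
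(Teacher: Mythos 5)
Your proposal diverges from the paper's argument in a way that opens a real gap, and the gap is not incidental: the one-coordinate slab reduction you propose cannot, even in principle, simultaneously preserve the ratio equality $\int fs/\int gs = r$ and the strict violation. By the $\Theta$ condition (via Lemma~\ref{rosnmix}) the ratio $\int fs/\int gs$ restricted to $\{x_i\in[\alpha,c]\}$ is $\geq r$ and on $\{x_i\in[c,\beta]\}$ is $\leq r$ for \emph{every} internal cut $c$, so an axis-parallel cut never leaves both halves with ratio $r$ unless the hyperplane ratio is identically $r$ on the whole slab. Your ``ratio-$r$ curve'' of slabs $[\alpha,\beta]$ does preserve the equality, but there is nothing forcing the quantity $AD-BC$ to stay positive along it, as you acknowledge. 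Your secondary IVT (cutting at $c$ with $D^-/B^-=D^+/B^+$ to pass the violation to one half) is correct when it applies, but (i) such a $c$ need not exist, since $D^-/B^-$ and $D^+/B^+$ are averages of a not-necessarily-monotone hyperplane quantity and may fail to cross on $(\alpha,\beta)$; and, more fundamentally, (ii) the resulting half no longer satisfies $A/B=r$, so you cannot return to the ratio-$r$ curve. The two devices work at cross purposes, and the lemma requires both the equality and the strict inequality on the limiting interval.

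The paper circumvents this by working with a two-dimensional set $K_m\subset\spa\{e_1,e_2\}$ and cutting with a \emph{rotating} line through a chosen interior rational point $O_m$. Rotating from vertical to horizontal sweeps the $K_+$-ratio from $\leq r$ (east cut, by the $\Theta$ monotonicity in $e_1$) to $\geq r$ (south cut, via Lemma~\ref{horline} and the hereditary $\Theta$ condition obtained after a type-1 and type-3 descent), so by IVT some angle yields both halves with ratio exactly $r$ (condition~\eqref{Kwar1}). Crucially, the paper does not try to push the comparison $A/B>C/D$ directly; it splits it into \eqref{Kwar1} and the one-sided bound \eqref{Kwar2} on $\tilde C/D$ alone, which passes to one half of \emph{any} cut by the mediant inequality, without any balancing of weights. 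This separation is exactly what your single-ratio approach lacks. Two further ingredients you omit are also load-bearing: the $\tilde f = f+\Delta f$ boundary-bump trick, which both keeps $K_\infty$ away from $\partial(\supp s)$ and converts the weak inequality surviving the limit back into a strict one; and the enumeration of rational points as the $O_m$, which forces $\bigcap K_m$ to be an interval or a point so that Lemma~\ref{czastka} and the induction hypothesis can be invoked. To repair your argument you would essentially have to reintroduce the two-dimensional cut family and the $\tilde f$ device, at which point you have reproduced the paper's proof.
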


\begin{proof}
We shall proceed by induction upon dimension. For $n = 0$ the condition (\ref{ogolnaKLS}) cannot be satisfied. For $n = 1$ no assumptions are needed, the interval $\supp s$ with the Lebesgue measure satisfies the conditions of the Lemma. Let us consider higher $n$. We shall a construct a decreasing sequence of spanned sets  $K_0 \supset K_1 \supset \ldots$ in $\spa\{e_1,e_2\}$ satisfying the following four conditions:
\begin{equation} \label{Kwar4} \int_{K_m \times\RO^{n-2}} s(x) dx > 0, \end{equation}
\begin{equation}\label{Kwar1} \frac{\int_{K_i \times \RO^{n-2}} f(x) s(x) dx}{\int_{K_i\times \RO^{n-2}} g(x) d(x) dx} = \frac{\int_{\RO^n} f(x) s(x) dx}{\int_{\RO^n} g(x) s(x) dx}\end{equation}
\begin{equation}\label{Kwar2} \frac{\int_{K_i\times \RO^{n-2}} \tf(x) h(x) s(x) dx}{\int_{K_i\times \RO^{n-2}} g(x) h(x) s(x) dx} \leq \frac{\int_{\RO^n} \tf(x) h(x) s(x) dx}{\int_{\RO^n} g(x) h(x) s(x) dx} < \frac{\int_{\RO^n} f(x) s(x) dx}{\int_{\RO^n} g(x) s(x) dx},\end{equation}
\begin{equation} \label{Kwar3} \bigcap_{m=0}^\infty K_m \mbox{ is an interval or a point.}\end{equation}
The function $\tf$ is a slight modification of $f$, which ensures our sequence does not approach the edge of $\supp s$ too closely. Choose $M'$ so that
\begin{equation}\label{Mprim}
\frac{M'}{M} > \frac{\int_{\RO^n} f(x) s(x) dx}{\int_{\RO^n} g(x) s(x) dx}
\end{equation}
and $\tilde{c} > 0$ so that
\begin{equation}\label{tildec}
\frac{\tilde{c} M'M + \int_{\RO^n} f(x) h(x) s(x) dx}{\int_{\RO^n} g(x) h(x) s(x) dx} < \frac{\int_{\RO^n} f(x) s(x) dx}{\int_{\RO^n} g(x) s(x) dx}.
\end{equation}
Let $A_t := (\RO^n \setminus \supp s) + t \Int B_2^n$, that is the set of points which are less than $t$ away from the edge of $\supp s$. Fix $\eps > 0$ so that $\int_{A_\eps}s(x) dx < \tilde{c}$. Let $\Delta f$ be a continuous function which is equal to $M'$ on $A_{\eps \slash 2}$, equal to 0 on $\RO^n \setminus A_\eps$ and is bounded from below by zero, and from above by $M'$ (such a function exists for instance by the Urysohn Lemma). Then $\tilde{f} := f + \Delta f$. Notice that the second inequality of condition (\ref{Kwar2}) is satisfied by (\ref{tildec}), and that if the set $L$ is contained in $A_{\eps \slash 2}$ then by (\ref{Mprim}) and (\ref{tildec}) the following inequality is satisfied
$$
\frac{\int_{L} \tf(x) h(x) s(x) dx}{\int_{L} g(x) h(x) s(x) dx} \geq
\frac{M'}{M} > \frac{\int_{\RO^n} f(x) s(x) dx}{\int_{\RO^n} g(x) s(x) dx} > 
\frac{\int_{\RO^n} \tf(x) h(x) s(x) dx}{\int_{\RO^n} g(x) h(x) s(x) dx}.$$

For $K_0$ we can take any rectangle in $\spa\{e_1,e_2\}$ with edges parallel to the coordinate axes and containing the projection of $\supp s$ onto $\spa \{e_1,e_2\}$. We order all the points with both coordinates rational into a sequence $(q_i)_{i=1}^\infty$. Having $K_m$ we will want to construct $K_{m+1}$. Let $O_m$ be the first point from the sequence $(q_i)$ contained in the interior of $K_m$ (by (\ref{Kwar4}) $K_m$ is a convex set of positive measure, and thus contains a point with both coordinates rational). Consider a vertical (ie. parallel to $e_2$) passing through $O_m$, by $K_E$ denote the part of $K_m$ to the right of that line, by $K_W$ the part to the left. Further on we shall prove the Lemma \ref{horline}, which will show that under the assumptions of our lemma we have
$$\frac{\int_{K_W\times \RO^{n-2}} f(x) s(x) dx}{\int_{K_W\times \RO^{n-2}} g(x) s(x) dx} \geq \frac{\int_{K_E\times \RO^{n-2}} f(x) s(x) dx}{\int_{K_E\times \RO^{n-2}} g(x) s(x) dx},$$
if both sides are well defined. If one of the sides is not well defined (say the one corresponding to $K_E$), then $(K_E\times \RO^{n-2}) \cap \supp s$ has measure zero, so we can set $K_{m+1} = K_W$ --- all integrals on $K_W\times \RO^{n-2}$ will be equal to the corresponding integrals on $K_m\times \RO^{n-2}$, so as $K_m$ satisfied (\ref{Kwar4}), (\ref{Kwar1}) and (\ref{Kwar2}), $K_W$ also satisfies them. We shall check the condition (\ref{Kwar3}) further on. Thus assume both sides are well--defined. From this we know that
\begin{equation}\frac{\int_{K_E\times \RO^{n-2}} f(x) s(x) dx}{\int_{K_E\times \RO^{n-2}} g(x) s(x) dx} \leq \frac{\int_{K_m\times \RO^{n-2}} f(x) s(x) dx}{\int_{K_m\times \RO^{n-2}} g(x) s(x) dx} = \frac{\int_{\RO^n} f(x) s(x) dx}{\int_{\RO^n} g(x) s(x) dx}.\label{east}\end{equation}
Similarly, when we consider a horizontal line through $O_m$, dividing $K_m$ into the upper part $K_N$ and lower part $K_S$, Lemma \ref{horline} will give
\begin{equation}\frac{\int_{K_S\times \RO^{n-2}} f(x) s(x) dx}{\int_{K_S\times \RO^{n-2}} g(x) s(x) dx} \geq \frac{\int_{\RO^n} f(x) s(x) dx}{\int_{\RO^n} g(x) s(x) dx},\label{south}\end{equation}
again we can assume the left side is well--defined.

If we will rotate clockwise a line passing through $O_m$ in a continuous fashion from the vertical position to the horizontal, and divide $K_m$ into two parts $K_+$ and $K_-$, then the integrals $\int_{K_+\times \RO^{n-2}} f(x) s(x) dx$ and $\int_{K_+\times \RO^{n-2}} g(x) s(x) dx$ will change continuously. If for any of the intermediate positions of the line the second of these integrals will be equal to zero, we can take $K_{m+1} = K_-$ as previously. If not, then their quotient changes continuously. For the vertical line $K_+ = K_E$, so by (\ref{east}) the quotient is no larger than for the whole $K_m$. For the horizontal line $K_+ = K_S$, thus by (\ref{south}) the quotient is no smaller than for the whole $K_m$. Thus by the Darboux property there exists a division of $K_m$ into two sets $K_+$ and $K_-$, both of which satisfy  (\ref{Kwar1}) and (\ref{Kwar4}). Both those sets are spanned.

Notice that at least one of these sets has to satisfy condition (\ref{Kwar2}) --- if both of them did not, then $K_m$ could not satisfy it either. Let $K_{m+1}$ be such a set.

Obviously $K_{m+1} \subset K_m$ and $K_{m+1}$ is a spanned set.

Now let us consider condition (\ref{Kwar3}). Notice that if a point $q$ is used as the point $O_m$ for some $m$, then it will lie on the edge of $K_{m+1}$, and thus will not lie in the interior of any $K_l$ for $l > m$, and so will not be re--used as $O_l$ for $l > m$. Thus no point with both coordinates rational lies in the interior of $K_\infty := \bigcap_{m=0}^\infty K_m$. Moreover $K_\infty$ is an intersection of a family of convex sets, and thus a convex set, so it has to be an interval or a point (if it contained three affinely independent points, it would contain their convex hull, and inside it a point with both coordinates rational). 

Consider the set $(K_\infty \times \R^{n-2}) \cap \supp s$. Notice that all $K_m$ satisfied in particular condition (\ref{Kwar2}), and thus by the definition of $\tilde{f}$ none of them is contained in  $A_{\eps \slash 2}$, and thus $K_\infty$ cannot be contained in $A_{\eps \slash 2}$ --- thus it contains a point from the interior of $\supp s$. Let $H$ be the minimal affine subspace containing $K_\infty \times \R^{n-2}$. Let $l$ be the affine subspace spanned by $K_\infty$ (and thus a line or a point). We shall apply Lemma \ref{czastka} taking $K' = K_\infty$. We will obtain some subsequence $m_i$ and a log--concave measure $\nu$ on $l$ supported on $K_\infty$. The functions $f,g,h$ and $s$ are continuous on $\supp s$, and thus all the integrals on $K_{m_i} \times \R^{n-2}$ in the condition (\ref{Kwar2}) converge to appropriate integral on $K_\infty \times \R^{n-2}$, and thus by the condition (\ref{Kwar2}), condition (\ref{ogolnaKLS}) holds for $H$. The function $s$ restricted to $H$ and multipied by the density of $\nu$ is a descendant of $s$ ($H$ can be given by either $e_1 = ae_2 + b$ or $e_2 = ae_1 + b$, as $K_\infty$ is a spanned set), and thus an Orlicz--based function, and the hereditary $\Theta$ condition for the restrictions of $f$ and $g$ to $H$ is trivially satisfied. Thus by the induction hypothesis there exists an interval $I$ in $H$ and a measure $\nu$ as in the thesis of the lemma --- and this interval and measure satisfy the thesis of the lemma also for $\R^n$, which ends the proof in the general case.

If $h$ was coordinate--wise decreasing, restricting $f$, $g$ and $h$ to $I$ we would obtain a contradiction with Lemma \ref{rosncalk} -- $f\slash g$ is decreasing on $I$ by the $\Theta$ condition and $h$ is decreasing on $I$, so inequality (\ref{ogolnaKLS}) cannot hold.
\end{proof}

To end the proof we only need Lemma \ref{horline}, which describes the behaviour of the proportion of integrals of $f$ and $g$ with the assumption of the hereditary $\Theta$ condition when we divide a spanned set by a horizontal or vertical line:

\begin{lemma} \label{horline}
Let $\F = (s,\R^n,\Epsilon), f$ and $g$ be as in the assumptions of Lemma \ref{transfin}. Let $K$ be such a spanned set in $\spa\{e_1,e_2\}$ that $\int_{K\times \R^{n-2}} s(x) dx > 0$. Let $K_x = K \cap \{v : \is{e_1}{v} = x\}$ be the intersection of the set $K$ with a vertical line. Let 
$$\Theta(x) = \frac{\int_{K_x \times \R^{n-2}} f(v) s(v) d\lambda_{n-1}(v)}{\int_{K_x \times \R^{n-2}} g(v) s(v)d\lambda_{n-1}(v)}.$$ Then $\Theta(x)$ is decreasing on its domain. In particular, if the line $\is{e_1}{v} = x_0$ divides $K$ into two parts, $K_- = \{v \in K : \is{e_1}{v} \leq x_0\}$ and $K_+ = \{v \in K : \is{e_1}{v} \geq x_0\}$, then $$\frac{\int_{K_- \times \R^{n-2}} f(v) s(v) dv}{\int_{K_- \times \R^{n-2}} g(v) s(v) dv} \geq \frac{\int_{K_+ \times \R^{n-2}} f(v) s(v) dv}{\int_{K_+ \times \R^{n-2}} g(v) s(v) dv},$$ if both sides are well defined.\end{lemma}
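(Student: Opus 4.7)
The strategy is to reduce the $n$-dimensional integrals defining $\Theta(x)$ to two-dimensional integrals on $K$ against the marginals
$$F(x,y) := \int_{\R^{n-2}} f(x,y,z)\,s(x,y,z)\,d\lambda_{n-2}(z), \qquad G(x,y) := \int_{\R^{n-2}} g(x,y,z)\,s(x,y,z)\,d\lambda_{n-2}(z),$$
and then to use the hereditary $\Theta$ condition twice (with two different splittings of $\R^n$), together with Lemma~\ref{rosnmix}, to move between different vertical slices of $K$. Writing $K_x=[\ell(x),u(x)]$ in the $e_2$-coordinate gives $\Theta(x)=\big(\int_{\ell(x)}^{u(x)} F(x,y)\,dy\big)\big/\big(\int_{\ell(x)}^{u(x)} G(x,y)\,dy\big)$.

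First I would apply the hereditary $\Theta$ condition to $\F$ itself with the splitting $V_1=\spa\{e_1,e_2\}$, $V_2=\spa\{e_3,\ldots,e_n\}$: this is precisely the assertion that $(x,y)\mapsto F(x,y)/G(x,y)$ is non-increasing in each coordinate jointly, and in particular $y\mapsto F(x,y)/G(x,y)$ is non-increasing on $K_x$ for every fixed $x$. Next, since $K$ is spanned by some $p\sleq q$, the boundary functions $\ell,u$ are monotone non-decreasing on $[p_1,q_1]$: $\ell$ is convex with $\ell(p_1)=p_2=\min_{v\in K}v_2\leq\ell(x)$ for every $x$, so writing any $x_1\in[p_1,x_2]$ as a convex combination of $p_1$ and $x_2$ gives $\ell(x_1)\leq\ell(x_2)$; the concave $u$ is handled symmetrically, its maximum being attained at $q_1$. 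Hence for $x_1<x_2$ the intervals $[\ell(x_1),u(x_1)]$ and $[\ell(x_2),u(x_2)]$ satisfy the configuration $a<b\leq d$, $a\leq c<d$ of Lemma~\ref{rosnmix}; applying that lemma to $F(x_1,\cdot)/G(x_1,\cdot)$ against Lebesgue measure yields
$$\Theta(x_1)\;\geq\;\frac{\int_{\ell(x_2)}^{u(x_2)} F(x_1,y)\,dy}{\int_{\ell(x_2)}^{u(x_2)} G(x_1,y)\,dy}.$$

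For the other half of the comparison I would consider the descendant $\F'$ of $\F$ obtained by multiplying $s$ by the log-concave indicator $w(v_2):=\1_{[\ell(x_2),u(x_2)]}(v_2)$ (the first son operation in the definition). The hereditary $\Theta$ condition applied to $\F'$ with the splitting $V_1=\spa\{e_1\}$, $V_2=\spa\{e_2,\ldots,e_n\}$ is exactly
$$\frac{\int_{\ell(x_2)}^{u(x_2)} F(x_1,y)\,dy}{\int_{\ell(x_2)}^{u(x_2)} G(x_1,y)\,dy}\;\geq\;\frac{\int_{\ell(x_2)}^{u(x_2)} F(x_2,y)\,dy}{\int_{\ell(x_2)}^{u(x_2)} G(x_2,y)\,dy} \;=\;\Theta(x_2)$$
for $x_1\leq x_2$, and chaining the two displays yields $\Theta(x_1)\geq\Theta(x_2)$. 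The ``in particular'' statement about $K_-$ and $K_+$ then follows by writing each side, via Fubini, as a ratio of one-dimensional integrals over $[p_1,x_0]$ or $[x_0,q_1]$ of the slice-numerator $\int_{\ell(x)}^{u(x)} F(x,y)\,dy$ and slice-denominator $\int_{\ell(x)}^{u(x)} G(x,y)\,dy$, and applying Lemma~\ref{rosnmix} one final time to this one-dimensional pair whose ratio is the non-increasing function $\Theta$.

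The main obstacle is picking the correct son in the second step: the key observation is that multiplication of $s$ by the indicator of a coordinate interval is an admissible son operation (indicators of intervals are log-concave), so that the hereditary $\Theta$ condition is genuinely available for the modified Orlicz-based function supported on $\R\times[\ell(x_2),u(x_2)]\times\R^{n-2}$. Degenerate slices with $\ell(x)=u(x)$ give ill-defined ratios $0/0$ and are handled by the ``both sides are well defined'' clauses throughout.
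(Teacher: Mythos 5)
Your proof is correct and follows essentially the same strategy as the paper's: you use the hereditary $\Theta$ condition with the splitting $V_1=\spa\{e_1,e_2\}$ to get coordinate-wise monotonicity of $F/G$, Lemma~\ref{rosnmix} to move between the $e_2$-intervals $K_{x_1}$ and $K_{x_2}$ at fixed $x=x_1$, and a descendant of $\F$ together with the $\Theta$ condition for the splitting $V_1=\spa\{e_1\}$ to move from $x_1$ to $x_2$ at the fixed interval $[\ell(x_2),u(x_2)]$, then chain; the ``in particular'' part then follows from Fubini and Lemma~\ref{rosnmix} exactly as in the paper. The only cosmetic differences are that you make explicit the convexity argument showing $\ell$ and $u$ are non-decreasing for a spanned set (which the paper just asserts), and that you realize the restriction to the interval $[\ell(x_2),u(x_2)]$ via a single son operation (multiplication by $\1_{[\ell(x_2),u(x_2)]}$), whereas the paper composes a shift of origin to $(0,y_a,0,\ldots,0)$ with multiplication by the half-line indicator $\1_{y\leq y_b}$; both are admissible and yield the same descendant.
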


\begin{proof}
From the $\Theta$ property for any $y_0$ the function  
$$x \mapsto \frac{\int_{\{x\}\times \{y_0\} \times \R^{n-2}} f(v) s(v)d\lambda_{n-2}(v)}{\int_{\{x\} \times \{y_0\} \times \R^{n-2}} g(v) s(v)d\lambda_{n-2}(v)}$$ is decreasing where well--defined. The support of $s$ is convex, $K$ is also convex, $\supp g \supset \supp s$, and thus the domain of this function is an interval. Thus by Lemma \ref{rosnmix}, we obtain
$$\frac{\int_{\{x\} \times [y_a,y_b] \times \R^{n-2}} f(v) s(v)d\lambda_{n-1}(v)}{\int_{\{x\} \times [y_a,y_b] \times \R^{n-2}} g(v) s(v)d\lambda_{n-1}(v)} \geq \frac{\int_{\{x\} \times [y_c,y_d] \times \R^{n-2}} f(v) s(v)d\lambda_{n-1}(v)}{\int_{\{x\} \times [y_c,y_d] \times \R^{n-2}} g(v) s(v)d\lambda_{n-1}(v)},$$ as long as $y_a < y_b < y_d$ and $y_a < y_c < y_d$ and both sides are well--defined.

The second property we need is 
$$\frac{\int_{\{x_1\} \times [y_a,y_b] \times \R^{n-2}} f(v) s(v)d\lambda_{n-1}(v)}{\int_{\{x_1\} \times [y_a,y_b] \times \R^{n-2}} g(v) s(v)d\lambda_{n-1}(v)} \geq \frac{\int_{\{x_2\} \times [y_a,y_b] \times \R^{n-2}} f(v) s(v)d\lambda_{n-1}(v)}{\int_{\{x_2\} \times [y_a,y_b] \times \R^{n-2}} g(v) s(v)d\lambda_{n-1}(v)},$$ as long as $x_2 > x_1$ and both sides are well--defined. To obtain this, notice that by moving the origin to $(0,y_a,0,\ldots,0)$ and multiplying $s$ by $\1_{y \leq y_b}$ we obtain a descendant of $\F$, and thus the hereditary $\Theta$ condition guarantees in particular
$$\frac{\int_{\{x_1\}\times\R \times \R^{n-2}} f(v) m(v) s(v)d\lambda_{n-1}(v)}{\int_{\{x_1\} \times\R \times \R^{n-2}} g(v) m(v) s(v)d\lambda_{n-1}(v)} \geq \frac{\int_{\{x_2\}\times\R \times \R^{n-2}} f(v) m(v) s(v)d\lambda_{n-1}(v)}{\int_{\{x_2\}\times\R \times \R^{n-2}} g(v) m(v) s(v)d\lambda_{n-1}(v)},$$ which gives the thesis.

Now notice that as $K$ is spanned, then for $x_2 > x_1$ we have $K_{x_1} = \{x_1\} \times [y_a,y_b]$, $K_{x_2} = \{x_2\} \times [y_c,y_d]$ and $y_a < y_b < y_d$ and $y_a < y_c < y_d$, which gives the first part of the thesis. The second follows from the first and Lemma \ref{rosnmix}.
\end{proof}

\section{Negative association of absolute values for Orlicz balls}

We shall use Lemma \ref{transfin} to prove negative association of absolute values for Orlicz balls. This section is based on \cite{malcin}. We shall need a pair of functions satisfying the $\Theta$ condition.

\begin{lemma}\label{dobratheta}
Let $\F = (s,V,\Epsilon)$ be an Orlicz--based function, and let $V = W \times \R$, $\Epsilon = \Epsilon' \cup \{e_n\}$ be a splitting of $V$ with respect to $\Epsilon$. Let $f(x) = s(x,z_2)$ and $g(x) = s(x,z_1)$ for some numbers $0 < z_1 < z_2$ and $x \in W$, where $\supp g$ is non--empty. Let $\G = (t,W,\Epsilon')$ be an oriented function, where $$t(x) := \1_{\supp g}(x) \prod_{i=1}^{\dim W} u_i(x_i)$$ for some log--concave functions $u_i$. Then $\G$ is an Orlicz--based function and $\G$, $f$ and $g$ satisfy the hereditary $\Theta$ condition.
\end{lemma}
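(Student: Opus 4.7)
The plan is to handle the two claims in sequence.

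For the Orlicz-based form, I would exhibit a direct representation. Let $[0,M]$ be the support of $m$; the assumption that $\supp g$ is non-empty forces $w_n(z_1) > 0$, so $\supp g = \{x \in W_+ : \sum_{i<n} f_i(x_i) \leq M - f_n(z_1),\ x_i \in \supp w_i \text{ for every } i<n\}$. Taking $f_i' := f_i$, $w_i'(x_i) := u_i(x_i)\1_{\supp w_i}(x_i)$ (log-concave as a product of log-concave functions on $\RO$), and $m'(s) := \1_{[0,\,M-f_n(z_1)]}(s)$, one checks directly that $t(x) = m'\!\big(\sum f_i'(x_i)\big)\prod w_i'(x_i)$, so $\G$ is Orlicz-based.

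For the $\Theta$ condition, the idea is to produce a single jointly log-concave function on $W_+ \times \RO$ whose slices at $z = z_2$ and $z = z_1$ recover $f \cdot t$ and $g \cdot t$. Put $\alpha := f_n(z_2)$, $\beta := f_n(z_1)$ (so $\alpha \geq \beta$), $W_i := w_i u_i$ (log-concave), and $\Sigma(x) := \sum_{i<n} f_i(x_i)$. Expanding and absorbing $\1_{\supp g}$ into the vanishing of $m$ (whenever $\Sigma(x) > M - \beta$ one also has $m(\Sigma(x)+\alpha)=0$) gives $f(x)t(x) = w_n(z_2)\,m(\Sigma(x)+\alpha)\prod_{i<n} W_i(x_i)$ and the analogous identity for $g\cdot t$ with $\beta$ in place of $\alpha$, so both are slices of
\[
\tilde s(x,z) := m\!\big(\Sigma(x)+f_n(z)\big)\prod_{i<n} W_i(x_i)\cdot w_n(z).
\]
This $\tilde s$ is jointly log-concave on $\RO^n$ because $\Sigma + f_n$ is convex, $m$ is log-concave and (having its maximum at $0$) non-increasing on its support, and the $W_i$ and $w_n$ are log-concave.

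Now fix any splitting $W = W_1 \oplus W_2$ with index sets $I_1, I_2$ and set $\Sigma_j(x_j) := \sum_{i\in I_j} f_i(x_i)$. Separating out the factors depending only on $x_1$ or $z$ gives
\[
\int_{W_2}\tilde s(x_1,x_2,z)\,dx_2 = \Bigl(\prod_{i\in I_1} W_i(x_{1,i})\Bigr) w_n(z)\cdot G\!\big(\Sigma_1(x_1)+f_n(z)\big),
\]
where $G(s) := \int_{W_2} m(s+\Sigma_2(x_2))\prod_{j\in I_2} W_j(x_{2,j})\,dx_2$ is log-concave on $\R$ by Prékopa--Leindler (its integrand is jointly log-concave in $(s,x_2)$ for the same reasons as $\tilde s$). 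Since $\alpha > \beta$, log-concavity of $G$ forces $s \mapsto G(s+\alpha)/G(s+\beta)$ to be non-increasing, so
\[
\frac{\int_{W_2} f\cdot t\,dx_2}{\int_{W_2} g\cdot t\,dx_2} = \frac{w_n(z_2)}{w_n(z_1)}\cdot \frac{G(\Sigma_1(x_1)+\alpha)}{G(\Sigma_1(x_1)+\beta)}
\]
is non-increasing in $\Sigma_1(x_1)$, hence in $x_1 \in W_{1,+}$ (each $f_i$ being increasing). That is precisely the $\Theta$ condition for $\G, f, g$.

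For the hereditary part it suffices to check that each of the three descendant operations preserves the shape of $\tilde s$: multiplying $t$ by a log-concave $w(x_k)$ for $k<n$ only updates $W_k \mapsto W_k w$; restriction to the hyperplane $x_i = a x_j + b$ fuses $f_i(x_i)+f_j(x_j)$ into a new Young function of $x_j$ and $W_i(x_i)W_j(x_j)$ into the log-concave function $W_i(ax_j+b)W_j(x_j)$; shifting the origin by $x_0 \in W_+$ reparametrises the $f_i$, $W_i$ and $m$ by constants exactly as in the first lemma of this section. In every case the new $\tilde s$ retains the structure required for the preceding paragraph's argument to go through verbatim. The main obstacle is really locating the right $\tilde s$ in the first place; once one sees that $f\cdot t$ and $g\cdot t$ are two slices of a jointly log-concave function on $W_+\times \RO$, Prékopa--Leindler and the one-variable log-concavity of $G$ do all the work, circumventing the direct analysis of a ratio of two marginal integrals (where log-concavity of the marginals alone would not suffice).
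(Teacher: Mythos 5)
Your proof is correct and follows essentially the same route as the paper's: exhibit an Orlicz-based representation of $\G$ directly (the paper writes $m'(\cdot) = \phi(m(\cdot + f_n(z_1)))$ with $\phi = \1_{(0,\infty)}$, which is your $\1_{[0,M-f_n(z_1)]}$), then observe that $f\cdot t$ and $g\cdot t$ are slices of a jointly log-concave function, apply Pr\'ekopa--Leindler to the $W_2$-marginal, and invoke one-dimensional log-concavity in the form $P_a P_{a+b+c} \le P_{a+b} P_{a+c}$ (which is exactly your ``$s \mapsto G(s+\alpha)/G(s+\beta)$ non-increasing''). The one genuine streamlining in your write-up is that you handle a general splitting $W = W_1\oplus W_2$ in one stroke by noting the ratio depends on $x_1$ only through $\Sigma_1(x_1)$, whereas the paper first reduces to $\dim W_1 = 1$ by an induction in which fixed coordinates are absorbed as descendants; this skips a small bookkeeping step but the content is identical. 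Two minor points worth flagging: your parenthetical justifying the absorption of $\1_{\supp g}$ covers only the case $m(\Sigma(x)+\beta)=0$, not the case where some $w_i(x_i)=0$ (that case is also fine, since then $W_i(x_i)=0$, but you should say so); and for the hereditary step the paper more explicitly constructs, for each descendant $\G'$, a matching descendant $\F'$ so that $f|_{W'}$ and $g|_{W'}$ again arise as two slices of $s'$ --- your sketch gestures at this but would benefit from an inductive framing matching the one you already set up for the Orlicz-based representation.
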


\begin{proof}
Let $n = \dim V$. First we shall check that $\G$ is an Orlicz--based function. Let $(w_i)_{i=1}^n$, $(f_i)_{i=1}^n$ and $m$ be functions certifying that $\F$ is an Orlicz--based function. Consider the following functions on $W$:  $(u_i \phi(w_i))_{i=1}^{n-1}$, $(f_i)_{i=1}^{n-1}$ and $x \mapsto \phi(m(x + f_n(z_1)))$, where $\phi = \1_{(0,\infty)}$. These functions give $\G$ as an Orlicz--based function.

We proceed to prove the $\Theta$ condition. We will consider the splitting $W = W_1 \oplus W_2$, let $\dim W_1 = k$. We want to check the function $$\frac{\int_{W_2} f(x,y) t(x,y) d\lambda_{n-k-1}(y)}{\int_{W_2} g(x,y) t(x,y) d\lambda_{n-k-1}(y)}$$ is coordinate--wise non--increasing on $W_1$. Obviously it suffices to change one coordinate at a time, keeping the others fixed. Notice that fixing the coordinate $x_i$ is equivalent to intersecting $W$ with a subspace given by $x_i = b$ and substituting $\F$ and $\G$ by their appropriate descendants (and, correspondingly, $f$ and $g$ by their appropriate restrictions), and thus by simple induction upon $\dim W_1$ it suffices to consider the case $\dim W_1 = 1$. Without loss of generality we can identify $W_2$ with $\R^{n-2}$. Thus it suffices to prove
\begin{align}\label{dupa12}
&\int_{\R^{n-2}} s(x_1,y,z_1) t(x_1,y)  d\lambda_{n-2}(y) \int_{\R^{n-2}} s(x_2,y,z_2) t(x_2,y) d\lambda_{n-2}(y) \leq \\ \nonumber & 
\int_{\R^{n-2}} s(x_1,y,z_2) t(x_1,y) d\lambda_{n-2}(y) \int_{\R^{n-2}} s(x_2,y,z_1) t(x_2,y) d\lambda_{n-2}(y).
\end{align} Notice that as $$s(x_i,y,z_j) = m(f_1(x_i) + f_2(y_1) + \ldots + f_{n-1}(y_{n-2}) + f_n(z_j)) w_1(x_i) w_n(z_j) \prod_{i=2}^{n-1} w_i(y_{i-1}),$$ in the inequality (\ref{dupa12}) the expressions $w_1(x_i)$ and $w_n(z_j)$ cancel out. Similarly the $u_1(x_i)$ expressions in $t$ cancel out, we can also drop the $\1_{\supp g}$ factor from $t$ as all the integrands disappear outside $\supp g$. Now consider $$r(y_0,y_1,\ldots,y_{n-2}) = m\Big(y_0 + f_2(y_1) + f_3(y_2) + \ldots + f_{n-1}(y_{n-2})\Big)\prod_{i=1}^{n-2} w_{i+1}(y_i) u_{i+1}(y_i).$$ This function is log--concave, as the composition of an increasing convex function with a convex function is convex and the product of log--concave functions is log--concave. Let $P_x = \int_{\R^{n-2}} r(x,v) d\lambda_{n-2}(v)$. As $r$ is log--concave, by the Prekopa--Leindler inequality (see \cite{ff}) we have $P_x^t P_y^{1-t} \leq P_{tx + (1-t)y}$. Let $a = f_1(x_1) + f_n(z_1)$, $b = f_1(x_2) - f_1(x_1)$ and $c = f_n(z_2) - f_n(z_1)$. In particular 
$$P_{a}^{c\slash(b+c)} P_{a+b+c}^{b\slash(b+c)} \leq P_{a+b},$$
$$P_{a}^{b\slash(b+c)} P_{a+b+c}^{c\slash(b+c)} \leq P_{a+c},$$ 
and thus
$$P_a P_{a+b+c} \leq P_{a+b} P_{a+c},$$
which proves inequality (\ref{dupa12}), and thus the thesis.

Finally, we want to prove the hereditary $\Theta$ condition. Let $\G' = (t', W', \Epsilon')$ be a descendant of $\G$, we will want to construct an Orlicz--based function $\F' = (s', W' \times \R, \Epsilon' \cup \{e_0\})$ and log--concave functions $u_i'$ so that $f$ restricted to $W'$ is equal to $s
(\cdot, z_1)$, $g$ restricted to $W'$ is $s(\cdot, z_2)$ and $t' = \supp g \prod u_i'$. 

The construction will proceed by induction upon the descendant hierarchy. Thus suppose $\G'$ is a son of $\G$. We have to consider three cases:
\begin{itemize}
\item In the first case, where $t'(x) = t(x) w(x_i)$ take $u_i' = u_i \cdot w$, and leave all other parameters unchanged.
\item In the second case, where $W'$ is given by $ax_j + b$, take as $\F'$ the son of $\F$ given by the same equation $x_i = ax_j + b$, and replace $u_i$ and $u_j$ by a single $u(t) = u_i(at + b) u_j(t)$.
\item In the third case, if the origin was moved to $x$, consider the son of $\F$ given by moving the origin to $(x,0)$, and functions $u_i'(t) = u_i(t + x_i)$.
\end{itemize}
In each case we can proceed with the inductive construction, and having constructed $\F'$ and $u_i'$ we apply the previous result.
\end{proof}

\begin{lemma}\label{slujsto} Let $\F = (s, V, \Epsilon)$ be an Orlicz--based function, and let $V = W \times \R$, $\Epsilon = \Epsilon' \cup \{e_n\}$ be a splitting of $V$ with respect to $\Epsilon$. Let $h:W_+ \ra \R$ be a bounded coordinate--wise decreasing function, and let $0 < z_1 < z_2$. Let $\G = (t, W, \Epsilon)$ be an oriented function, where $$t(x) = \1_{\{s(x,z_1) > 0\}} \prod u_i(x_i),$$ where $u_i$ are log--concave functions. Then  
$$\frac{\int_{W_+} h(x) s(x,z_2) t(x) d\lambda_{n-1}(x)}{\int_{W_+} h(x) s(x,z_1) t(x) d\lambda_{n-1}(x)} \geq \frac{\int_{W_+} s(x,z_2) t(x) d\lambda_{n-1}(x)}{\int_{W_+} s(x,z_1) t(x) d\lambda_{n-1}(x)}$$ if both sides are well defined.\end{lemma}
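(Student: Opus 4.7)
The plan is to apply Lemma~\ref{transfin} in a proof by contradiction. Setting $f(x) = s(x,z_2)$ and $g(x) = s(x,z_1)$ on $W$, the desired inequality, after cross--multiplication, reads
$$\int_{W_+} h\, f\, t\,d\lambda_{n-1} \cdot \int_{W_+} g\, t\,d\lambda_{n-1} \;\geq\; \int_{W_+} h\, g\, t\,d\lambda_{n-1} \cdot \int_{W_+} f\, t\,d\lambda_{n-1}.$$
This form is invariant under $h \mapsto h + K$ (the $K$--terms cancel), so by adding a sufficiently large constant to the bounded function $h$ I may reduce to the case $\eps \leq h \leq M$ for some $M > \eps > 0$. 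I also rescale $t$ so that $\int t\, d\lambda_{n-1} = 1$, which changes neither side.

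Next, Lemma~\ref{dobratheta} applies essentially verbatim: $\G = (t, W, \Epsilon')$ is Orlicz--based and the triple $(\G, f, g)$ satisfies the hereditary $\Theta$ condition, which is precisely the structural input required by Lemma~\ref{transfin}. The remaining support conditions also hold: $\supp t \subset \supp g$ is built into the definition $t = \1_{\{s(\cdot, z_1) > 0\}} \prod u_i$, and $\{f > 0\} \subset \{g > 0\}$ because for $z_2 > z_1$ one has $f_n(z_2) \geq f_n(z_1)$, while the factor $m$ in the representation of $s$ is non--increasing on $\RO$ (being log--concave with maximum at $0$ and $m(\infty) = 0$).

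Finally, assuming the conclusion of the lemma fails, I would have
$$\frac{\int_{W_+} h\, f\, t\,d\lambda_{n-1}}{\int_{W_+} h\, g\, t\,d\lambda_{n-1}} \;<\; \frac{\int_{W_+} f\, t\,d\lambda_{n-1}}{\int_{W_+} g\, t\,d\lambda_{n-1}},$$
which is exactly condition~(\ref{ogolnaKLS}) of Lemma~\ref{transfin} with $t$ in the role of $s$. Lemma~\ref{transfin} then forces $h$ to fail to be coordinate--wise non--increasing, contradicting the standing hypothesis on $h$. The main obstacle I expect is the continuity requirement of Lemma~\ref{transfin}: the functions $f$, $g$, $t$ may jump at boundaries of supports of log--concave factors or where Young functions take $\infty$, and $h$ is assumed only bounded and monotone. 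I would handle this via a standard approximation, replacing $f$, $g$, $h$, $t$ by continuous proxies whose integrals converge and for which the strict inequality above persists for some approximant; the hereditary $\Theta$ condition survives such approximation since the relevant ratios pass to the limit by dominated convergence.
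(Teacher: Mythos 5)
Your proposal follows the paper's proof almost exactly: contradiction via Lemma~\ref{transfin} with $f = s(\cdot, z_2)$, $g = s(\cdot, z_1)$, the density $t$ (normalized) in the role of $s$, the hereditary $\Theta$ condition from Lemma~\ref{dobratheta}, a shift of $h$ by a constant to bound it away from zero, and approximation to get continuity. The one imprecision is in the final approximation remark: you say the hereditary $\Theta$ condition ``survives such approximation since the relevant ratios pass to the limit,'' but that is the wrong direction --- Lemma~\ref{transfin} is applied to the approximants, so the $\Theta$ condition must be verified \emph{for them}, not inferred in the limit. The paper arranges this by approximating $m$ from above by continuous log--concave functions with maxima at zero and the Young functions $f_i$ from below by continuous Young functions (and $h$ from above by continuous coordinate--wise decreasing functions), so the approximating data remain Orlicz--based and Lemma~\ref{dobratheta} applies directly to them; replacing $f$, $g$, $t$ by arbitrary continuous proxies would not obviously preserve the structure that Lemma~\ref{dobratheta} needs.
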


\begin{proof}
Suppose the thesis does not hold. Then for some fixed $h$ 
\begin{equation}\label{slabaostra}\frac{\int_{W_+} h(x) s(x,z_2) t(x) d\lambda_{n-1}(x)}{\int_{W_+} h(x) s(x,z_1) t(x) d\lambda_{n-1}(x)} < \frac{\int_{W_+} s(x,z_2) t(x) d\lambda_{n-1}(x)}{\int_{W_+} s(x,z_1) t(x) d\lambda2_{n-1}(x)}.\end{equation} We would like to apply Lemma \ref{transfin}. The role of $f$ will be taken by $s(x,z_2)$, the role of $g$ --- by $s(x,z_1)$. For this we need $s(x,z_1), s(x,z_2)$ and $h$ to be continuous and $h$ to be bounded uniformly away from zero. First notice, that if inequality \ref{slabaostra} holds, then it will also hold if we substitute $h(x) + C$ for $h$. Thus we may assume $h$ is strictly larger than, say, one. Since we have a sharp inequality in (\ref{slabaostra}), it will also hold after a small enough modification of $s$ and $h$. Let $s(x) = m(\sum f_i(x_i)) \prod w_i(x_i)$. First we approximate $m$ from above by a decreasing sequence $m_k$ of continuous log--concave functions with maxima at zero, which converges pointwise to $m$. Then $s_k(x,z_i)$ converges monotonously to $s(x,z_i)$, and thus all the integrals in the inequality (\ref{slabaostra}) converge and we can choose such a $k$, that after substituting $s_k$ for $s$ the  inequality (\ref{slabaostra}) still holds. Similarly we can approximate $f_i$ from below by continuous Young functions (ie. functions that do not jump to $\infty$). Then $s_k$ will still be an Orlicz--based function, and $f$ and $g$ will be continuous and satisfy $\supp f \subset \supp g$ (as $f \leq g$, as both the Young functions and $m$ are increasing). Similarly we approximate $h$ from above by a sequence of continuous functions $h_k$, decreasing coordinate--wise and uniformly bounded away from zero and pointwise convergent to $h$, and substitute $h$ by a sufficiently close approximation $h_k$. We can assume that after these modifications the inequality (\ref{slabaostra}) still holds.

After those modifications the assumptions of Lemma \ref{transfin} are satisfied (the hereditary $\Theta$ condition holds by Lemma \ref{dobratheta}, as $s_k$ is an Orlicz--based function, and condition (\ref{ogolnaKLS}) is simply the inequality (\ref{slabaostra}). Thus the thesis of Lemma holds --- but we assumed $h_k$ to be coordinate--wise decreasing, which contradiction end the proof of our lemma.
\end{proof}

Notice that from the above lemma we obtain by switching sides that the function
$$\frac{\int_{\RO^k} h(x) s(x,z) dx}{\int_{\RO^k} s(x,z) dx}$$ is coordinate--wise decreasing as a function of $z$ for any coordinate--wise decreasing function $h$ and any Orlicz--based function $\F$ on $\R^n$. Thus we can prove the following corollary just as we proved the last part of Lemma \ref{dobratheta}:

\begin{cor}\label{ftc} Consider $\F$, $\G$, and $h$ defined as above. Then the Orlicz--based function $\G$ and functions  
$\int_{W_+} h(x) s(x,z_1) t(x) d\lambda_{n-1}(x)$ and $\int_{W_+} s(x,z_1) t(x) d\lambda_{n-1}(x)$
satisfy the hereditary $\Theta$ condition.
\end{cor}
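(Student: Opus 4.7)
The plan is to mirror the inductive argument used at the end of Lemma~\ref{dobratheta}. Interpreting the corollary's ``functions'' as the integrands $\hat f(x) = h(x)\, s(x, z_1)$ and $\hat g(x) = s(x, z_1)$ viewed as functions on $W$, the hereditary $\Theta$ condition for $(\G, \hat f, \hat g)$ demands that for every descendant $\G' = (t', W', \Epsilon')$ of $\G$ and every splitting $W' = W'_1 \oplus W'_2$ with $0 \leq x \leq y$ in $W'_1$,
\begin{equation*}
\frac{\int_{W'_2} h(x, v)\, s(x, v, z_1)\, t'(x, v)\, d\lambda(v)}{\int_{W'_2} s(x, v, z_1)\, t'(x, v)\, d\lambda(v)} \geq \frac{\int_{W'_2} h(y, v)\, s(y, v, z_1)\, t'(y, v)\, d\lambda(v)}{\int_{W'_2} s(y, v, z_1)\, t'(y, v)\, d\lambda(v)}.
\end{equation*}
I would first verify this for $\G$ itself (the base case) and then propagate it along the descendant hierarchy by producing, for each son $\G'$ of $\G$, a suitable son $\F'$ of $\F$ so that the pair restricts compatibly.

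For the base case, fix $z_1$ and consider the product $\tilde s := s(\cdot, z_1)\cdot t$ on $W$. This is itself an Orlicz--based function on $W$: restricting $s$ to the hyperplane $\{x_n = z_1\}$ is a son of the second (hyperplane) type with $a=0$ and $b=z_1$, and multiplying by the log--concave product $\prod u_i$ amounts to a finite sequence of sons of the first type. With $\tilde s$ in hand, the required monotonicity in $x \in W'_1$ is exactly the content of the remark immediately preceding the corollary, which asserts that $\int h(x,v)\,\tilde s(x,v)\,dv\,\big/\,\int \tilde s(x,v)\,dv$ is coordinate--wise non--increasing in $x$ whenever $h$ is coordinate--wise non--increasing.

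For the inductive step I would handle the three types of sons by essentially the same construction as in Lemma~\ref{dobratheta}: if $t' = t \cdot w(x_i)$, absorb $w$ into $u_i$ and leave $\F$ unchanged; if $W'$ is cut out by $x_i = a x_j + b$, take the corresponding hyperplane son of $\F$ (which acts trivially on the last coordinate, so $z_1$ is unaffected) and merge $u_i, u_j$ into a single log--concave factor $u(t) = u_i(at+b) u_j(t)$; and if the origin of $W$ is translated to some $x \in W_+$, translate the origin of $V$ to $(x,0)$ and set $u_i'(\cdot) := u_i(\cdot + x_i)$. In every case the pair $(h\cdot s(\cdot, z_1),\, s(\cdot, z_1))$ restricts to the analogous pair for the new Orlicz--based $\F'$, and the base case applied to $\G'$ and $\F'$ delivers the $\Theta$ condition for $\G'$.

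The main obstacle I anticipate is purely bookkeeping: confirming that the indicator factor $\1_{\{s(\cdot, z_1) > 0\}}$ hiding inside $t$ is preserved by the three manipulations, and that the coordinate--wise monotonicity of $h$ descends to the new coordinate system in the hyperplane case (which it does, because the assumption $a \geq 0$ ensures that the merged coordinate is still compatible with the partial order, and translations into $W_+$ preserve it as well). Once these verifications are carried out, all of the analytic content has already been done for us: the Pr\'ekopa--Leindler ingredient of Lemma~\ref{dobratheta} is here replaced by the remark applied to $\tilde s$, and the inductive propagation proceeds identically.
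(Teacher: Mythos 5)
Your proposal correctly identifies the \emph{structure} of the paper's (one--line) proof: establish the $\Theta$ condition for $\G$ itself from the remark preceding the corollary, then propagate along the descendant hierarchy exactly as in the last part of Lemma~\ref{dobratheta}. The inductive step is right, and your bookkeeping remarks (on the indicator inside $t$ and on how the hyperplane move interacts with the partial order) are sound. The problem is the base case: you have misidentified both the corollary's functions and what the remark asserts.

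The remark asserts monotonicity of $\int h(x)s(x,z)\,dx\big/\int s(x,z)\,dx$ as a function of $z$, where $h$ depends \emph{only on the variable $x$ that is integrated out}. You restate it as: $\int h(x,v)\tilde s(x,v)\,dv\big/\int\tilde s(x,v)\,dv$ is non--increasing in the \emph{un}integrated variable $x$, with $h$ allowed to depend on both $x$ and $v$. That is not what the remark says, and the stronger version is false. Take $n=3$, $s=\1_{\{x^2+v^2+z^2\le 3\}}$ on $\RO^3$, $z_1=0$, $u_i\equiv 1$ so that $t=\1_{\{x^2+v^2\le 3\}}$, and $h(x,v)=C-x-v$ with $C$ large enough that $h\ge 0$ on the support. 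Then
\[
\frac{\int_0^{\sqrt{3-x^2}}h(x,v)\,dv}{\int_0^{\sqrt{3-x^2}}dv}
= C-x-\tfrac12\sqrt{3-x^2},
\]
whose derivative $-1+\tfrac{x}{2\sqrt{3-x^2}}$ is \emph{positive} for $x>\sqrt{12/5}$, so the ratio is increasing near the boundary. The two effects --- $h$ decreasing directly in $x$, versus the conditional distribution of $v$ concentrating near the origin as $x$ grows --- point in opposite directions, and you cannot conclude a sign. The functions the corollary actually needs, read against their use in Lemma~\ref{ujsto}, are the partially--integrated marginals $f(\cdot)=\int h(y)s(\cdot,y)\,dy$ and $g(\cdot)=\int s(\cdot,y)\,dy$ where $h$ depends \emph{only} on the integrated variable $y$; for these, $f/g=\E[h(Y)\mid\cdot\,]$ and Lemma~\ref{slujsto} (which is what the remark rephrases) does deliver the monotonicity, and your descendant argument then carries it to the hereditary version. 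As written, however, your base case claims something that is not in the remark and is not true.
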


Now we can prove our thesis in full generality:

\begin{lemma}  \label{ujsto} Let $\f = (s,\R^n,\Epsilon)$ be an Orlicz--based function, let $h: \RO^k \ra \R$ and $\bh: \RO^{n-k} \ra \R$ be coordinate--wise decreasing functions. Then
\begin{equation}\label{costamcos}\frac{\int_{\RO^{n-k}} \bh(x) \int_{\RO^k} h(y) s(x,y) dy dx}{\int_{\RO^{n-k}} \bh(x) \int_{\RO^k} s(x,y) dy dx} 
\leq
\frac{\int_{\RO^{n-k}} \int_{\RO^k} h(y) s(x,y) dy dx}{\int_{\RO^{n-k}} \int_{\RO^k} s(x,y) dy dx}.
\end{equation}\end{lemma}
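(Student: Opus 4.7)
The plan is to apply Lemma \ref{transfin} on $\RO^{n-k}$ after absorbing the last $k$ coordinates into the ambient measure. Define the marginal $\tilde s(x) := \int_{\RO^k} s(x,y)\,dy$ for $x \in \RO^{n-k}$; writing $s(x,y) = m\bigl(\sum_{i=1}^{n-k} f_i(x_i) + \sum_{j=1}^{k} f_{n-k+j}(y_j)\bigr) \prod_i w_i(x_i) \prod_j w_{n-k+j}(y_j)$, one checks that $\tilde s(x) = \tilde m\bigl(\sum_{i=1}^{n-k} f_i(x_i)\bigr) \prod_i w_i(x_i)$, where $\tilde m(t) = \int m\bigl(t + \sum_j f_{n-k+j}(y_j)\bigr) \prod_j w_{n-k+j}(y_j)\,dy$. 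Since $m$ is log--concave and non--increasing and each $f_{n-k+j}$ is convex, the integrand is jointly log--concave in $(t,y)$, and Pr\'ekopa--Leindler makes $\tilde m$ log--concave; it is also non--increasing, attains its maximum at $0$, and has compact support, so $\tilde s$ is Orlicz--based. Let $H(x) := \int h(y)\, s(x,y)\,dy\,/\,\tilde s(x)$ be the conditional expectation; then inequality (\ref{costamcos}) is equivalent to $\cov_{\tilde s}(\bh, H) \le 0$.

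Suppose (\ref{costamcos}) fails strictly. I would then apply Lemma \ref{transfin} to the normalized Orlicz--based function $\tilde s/\!\int\!\tilde s$ on $\RO^{n-k}$, taking $f := 1$, $g := H$, and letting the ``$h$'' of Lemma \ref{transfin} be $\bh$. Reciprocating, the strict failure of (\ref{costamcos}) becomes exactly condition (\ref{ogolnaKLS}); since $\bh$ is coordinate--wise non--increasing, Lemma \ref{transfin} then forces a contradiction. The technical hypotheses (continuity, strict positivity of $\bh$ and $h$, $\int s = 1$, $\{f>0\}\subset\{g>0\}$, and $\supp s \subset \supp g$) are arranged by the same approximations used in Lemma \ref{slujsto}: smoothing $m$ and the $f_i$, shifting $\bh$ and $h$ by small positive constants, and if necessary replacing $f=1$ by a continuous cutoff near $\supp\tilde s$.

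The main obstacle is verifying the hereditary $\Theta$ condition for the triple $(\tilde s, 1, H)$. Unwinding, this asks that for every descendant of $\tilde s$ and every splitting $V_1 \oplus V_2 = \R^{n-k}$, the function
\[
x_1 \mapsto \frac{\int_{V_2} H(x_1, w)\, \tilde s(x_1, w)\, dw}{\int_{V_2} \tilde s(x_1, w)\, dw} = \frac{\int_{V_2 \times \RO^k} h(y)\, s(x_1, w, y)\, dw\, dy}{\int_{V_2 \times \RO^k} s(x_1, w, y)\, dw\, dy}
\]
be coordinate--wise non--decreasing in $x_1 \in V_1$. Fixing all but one coordinate of $x_1$ --- itself a descendant operation (restriction to $x_i = b$) --- and treating $(w,y) \in V_2 \times \RO^k$ as one integration vector on which $h(y)$ (trivially extended to be constant in $w$) is still coordinate--wise non--increasing, monotonicity in the remaining single coordinate is exactly the conclusion of Lemma \ref{slujsto}, with that coordinate playing the role of $z$ and $V_2 \times \RO^k$ playing the role of $W$. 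The hereditary part is automatic: descendants of $\tilde s$ (multiplying by a log--concave $w(x_i)$, restricting to a hyperplane $x_i = ax_j + b$ with $i,j \le n-k$, or shifting the origin) correspond to the analogous operations on the $x$--coordinates of $s$, under which $H$ transforms in parallel, so the reduction to Lemma \ref{slujsto} still applies.
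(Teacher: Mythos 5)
Your proof is correct and follows the same strategy as the paper: negate the inequality, approximate so that Lemma \ref{transfin} applies with $h$-of-the-lemma $=\bh$, and derive the needed hereditary $\Theta$ condition from Lemma \ref{slujsto}. The paper implements this by taking $f(x)=\int_{\RO^k}h(y)s(x,y)\,dy$ and $g(x)=\int_{\RO^k}s(x,y)\,dy$ and invoking Corollary \ref{ftc}, whereas you first absorb the $y$-coordinates into the ambient: you prove that the marginal $\tilde s$ is itself Orlicz--based (via Pr\'ekopa--Leindler) and then take $f\equiv1$, $g=H$ the conditional expectation. These are the same inequality; the only difference is bookkeeping, i.e.\ where the factor $\tilde s$ sits.

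Your parametrization has a genuine advantage: the direction of monotonicity in the $\Theta$ condition becomes transparent. The $\Theta$ condition requires $\int f\,\tilde s/\int g\,\tilde s$ to be non--increasing; with $f\equiv1$, $g=H$, this quotient is $1/\E[h\mid\cdot]$, and Lemma \ref{slujsto} says precisely that the conditional average of a coordinate--wise decreasing $h$ over the shrinking slice is \emph{increasing}, so the quotient is indeed decreasing. (The remark following Corollary \ref{ftc} in the paper states this average is ``decreasing,'' which is the wrong direction --- equivalently the paper's $f$ and $g$ should be swapped in the proof of Lemma \ref{ujsto} --- so your formulation sidesteps a sign slip in the source.) Your verification that $\tilde s$ is Orlicz--based is an extra step the paper does not spell out, but it is needed in your phrasing since the ambient function of Lemma \ref{transfin} must be Orlicz--based, and the Pr\'ekopa--Leindler argument you give for the partial integral $\tilde m$ is correct. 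The hereditary reduction --- that every descendant of $\tilde s$ is the marginal of a corresponding descendant of $s$, with $H$ transforming in parallel --- is stated somewhat informally but is exactly the inductive bookkeeping performed in Lemma \ref{dobratheta} and Corollary \ref{ftc}, so no real gap. Your attention to $\{f>0\}\subset\{g>0\}$ and $\supp\tilde s\subset\supp g$ (shifting $h,\bh$ by constants and cutting $f\equiv1$ off near $\supp\tilde s$) is appropriate; the same kind of care is tacitly needed in the paper's version as well.
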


The proof will be almost identical to the proof of Lemma \ref{slujsto}:
\begin{proof} 
Again we shall aplly Lemma \ref{transfin}. Assume an opposite inequality holds. The role of the function $h$ will be taken, as in Lemma \ref{slujsto}, by a continuous, uniformly bounded from below and coordinate--wise decreasing approximation of $\bh$. We define $f(x) = \int_{\RO^k} h(y) s(x,y) dy$ and $g(x) = \int_{\RO^k} s(x,y) dy$. Approximating $m$, $f_i$ and $\bh$ by continuous functions as in \ref{slujsto} we shall obtain continuous modifications of $f$ and $g$, for which (\ref{ogolnaKLS}) still holds. The hereditary $\Theta$ condition is satisfied by Corollary \ref{ftc}. However our function $h$ is coordinate--wise decreasing, which contradicts Lemma \ref{transfin}. The contradictions shows inequality (\ref{costamcos}) must hold, which ends the proof.\end{proof}

From the above we immediately obtain the main theorem of this paper:
\begin{thm} \label{orliczglownetw}\label{orliczglownetwierdzenie}
Let $f_i$ be Young functions and let $m: \RO \ra \RO$ be any log--concave non--increasing function. Assume that the measure on $\R^n$ with density $m(\sum f_i(|x_i|)$ is probabilistic, let $X$ be a random vector distributed according to this measure. Then the sequence $|X_1|, |X_2|, \ldots, |X_n|$ is negatively associated.
\end{thm}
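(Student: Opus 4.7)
The plan is to deduce the theorem almost directly from Lemma \ref{ujsto} by a chain of elementary reductions. First I would exploit the sign-symmetry of the density: since $m(\sum f_i(|x_i|))$ is invariant under each map $x_i \mapsto -x_i$, the random vector $(|X_1|,\ldots,|X_n|)$ has the same law as $X$ conditioned on $\RO^n$, so its density is proportional to $s(x) = m(\sum f_i(x_i))$ on $\RO^n$. Note $s$ fits the Orlicz-based template with $w_i \equiv 1$ on $\RO$ (a log-concave choice), provided $m$ has compact support; I will address this technical point below by truncation.

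Next I would set up the reduction to the form of Lemma \ref{ujsto}. Given disjoint index sets $I = \{i_1,\ldots,i_k\}$ and $J = \{j_1,\ldots,j_\ell\}$ and coordinate-wise increasing bounded $f,g$, I would allow $f$ and $g$ to be formally extended as constants in the remaining variables, and permute the coordinates so that $I = \{1,\ldots,k\}$ and $J = \{k+1,\ldots,n\}$; this relabeling only permutes the $f_i$'s in the definition of $s$ and hence preserves the Orlicz-based structure. Writing $\tilde f = M_1 - f$ and $\tilde g = M_2 - g$ for upper bounds $M_1, M_2$ of $f,g$, both $\tilde f$ and $\tilde g$ are nonnegative bounded and coordinate-wise decreasing, and bilinearity of covariance gives $\cov(f,g) = \cov(\tilde f,\tilde g)$. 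So proving $\cov(\tilde g,\tilde f) \le 0$ under the probability measure with density proportional to $s$ on $\RO^n$ suffices.

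The key observation is then purely algebraic: after cross-multiplying and dividing by $(\int s)^2$, inequality (\ref{costamcos}) of Lemma \ref{ujsto} (applied with $h = \tilde f$ on $\RO^k$ and $\bh = \tilde g$ on $\RO^{n-k}$) is exactly
\[
\frac{\int \tilde g(x)\tilde f(y) s(x,y)\,dx\,dy}{\int s(x,y)\,dx\,dy} \le \frac{\int \tilde g(x) s(x,y)\,dx\,dy}{\int s(x,y)\,dx\,dy}\cdot\frac{\int \tilde f(y) s(x,y)\,dx\,dy}{\int s(x,y)\,dx\,dy},
\]
i.e.\ $\cov(\tilde g,\tilde f)\le 0$ under the normalized measure. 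Since the hypotheses of Lemma \ref{ujsto} ask only that $s$ be Orlicz-based and $h,\bh$ be coordinate-wise decreasing, the invocation is immediate.

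The only non-trivial technicality is that the theorem allows $m$ without compact support, while the definition of Orlicz-based function requires compact support. I would resolve this by truncation: set $m_K(t) = m(t)\mathbf{1}_{[0,K]}(t)$, which is still log-concave, non-increasing, maximal at $0$, and now has compact support, so $s_K(x) = m_K(\sum f_i(x_i))$ is genuinely Orlicz-based and Lemma \ref{ujsto} applies for each $K$. Both sides of the covariance inequality for $s_K$ converge to the corresponding expressions for $s$ as $K\ra\infty$ by monotone (or dominated, using that $f,g$ are bounded) convergence, so the inequality passes to the limit. The main obstacle, such as it is, is bookkeeping: matching the decreasing-function convention of Lemma \ref{ujsto} to the increasing-function convention in Definition \ref{NegAssDef}, permuting coordinates, and handling the noncompactly supported $m$ by the truncation argument above; there is no genuinely new combinatorial or analytic step required.
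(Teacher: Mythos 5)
Your proposal is correct and fills in precisely the details that the paper leaves implicit when it says the theorem follows ``immediately'' from Lemma \ref{ujsto}: the reduction from $\R^n$ to the positive orthant by symmetry of $m(\sum f_i(|x_i|))$, the translation between coordinate-wise increasing and coordinate-wise decreasing functions via $f \mapsto M - f$, the observation that inequality (\ref{costamcos}) cross-multiplies to $\cov(\bh,h) \le 0$, and the truncation $m_K = m \cdot \mathbf{1}_{[0,K]}$ (log-concave as the product of log-concave factors) followed by monotone convergence to remove the compact-support restriction built into the definition of Orlicz-based functions. This is the same route the paper intends, just spelled out.
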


In particular we recover the negative association of absolute values for a random vector uniformly distributed on a generalized Orlicz ball.

\end{document}